\documentclass[11pt]{article}
\usepackage{mathrsfs}
\usepackage{amsmath, mathtools}
\usepackage{amssymb}
\usepackage{amsthm}
\usepackage{graphicx}
\usepackage{epic}
\usepackage{xcolor,cite}
\usepackage{cases}
\usepackage{pst-poly}  
\usepackage{pst-plot}  
\usepackage{pst-poly}  
\usepackage{CJK,makecell}

\usepackage{verbatim}
\usepackage{subfig}
\usepackage{booktabs}
\definecolor{dkgreen}{rgb}{0,0.6,0}
\numberwithin{equation}{section}

\usepackage{etex}

\usepackage[left,mathlines,displaymath]{lineno}

\usepackage{pgf}
\renewcommand{\paragraph}{\roman{paragraph}}

\usepackage{bm}
\usepackage[hidelinks]{hyperref}
\usepackage{tikz}
\usetikzlibrary{automata}
\usepackage{enumitem}
\usetikzlibrary{calc}
\usetikzlibrary{arrows,shapes,positioning}
\usetikzlibrary{decorations.markings}
\tikzstyle arrowstyle=[scale=1]
\tikzstyle directed=[postaction={decorate,decoration={markings, mark=at position .65 with {\arrow[arrowstyle]{stealth}}}}]
\tikzstyle reverse directed=[postaction={decorate,decoration={markings, mark=at position .65 with {\arrowreversed[arrowstyle]{stealth};}}}]

\newtheorem{theorem}{Theorem}[section]

\newtheorem{definition}[theorem]{Definition}

\newtheorem{problem}[theorem]{Problem}

\newtheorem{example}[theorem]{Example}

\newtheorem{Remark}[theorem]{Remark}

\begin{document}

\title{The enumeration of odd spanning trees in graphs\thanks{E-mail addresses: xush0928@163.com (S.\ Xu), xukx1005@nuaa.edu.cn (K.\ Xu). $\dag$ Corresponding author.}}\author{{Shaohan Xu, Kexiang Xu $^{\dag}$}\\\\{\small School of Mathematics, Nanjing University of Aeronautics and Astronautics,}\\
{\small Nanjing, Jiangsu 211106, PR China}\\
}

\maketitle
\begin{abstract}
A graph  is  odd if  all of its vertices  have odd degrees. In particular, an odd spanning tree  in a connected graph  is a spanning tree in which all vertices have  odd degrees. In this paper we establish a unified technique to  enumerate odd spanning trees of a graph $G$ in terms of a  multivariable polynomial associated with $G$ and indeterminates  $\{x_{i}:v_i\in V(G)\}$. As applications, the enumerative formulas for odd spanning trees in complete graphs, complete multipartite graphs,  almost complete graphs, complete split graphs and Ferrers graphs are,  respectively, derived from our work.\\
\noindent{\bf Keywords:} odd spanning tree; enumeration; weighted Matrix-Tree theorem\\
\noindent{{\bf  2020 Mathematics Subject Classification:}  05C05, 05C30, 05C22}
\end{abstract}

\section{Introduction}
 All graphs considered in this paper are finite, undirected and without loops.  Let $G$ be a weighted  graph with vertex set $V(G)$ and edge set $E(G)$, where each edge $e=ij\in E(G)$ is weighted by an indeterminate $\omega_e(G)=\omega_{ij}(G)$. The word ``weighted" is omitted when $\omega_{e}(G)=1$ for each $e\in E(G)$.  A spanning tree of $G$ is a connected spanning subgraph containing no cycles.  Denote by $\mathcal{T}(G)$  the set of spanning trees of $G$.   Let $\tau(G,\omega)$ be the {\it sum of weights of spanning trees} of $G$, where the weight of a spanning tree $T$ in $G$ is the product of weights of edges in $T$. That is,
 \begin{equation}\label{x1-3}
 \tau(G,\omega)=\sum_{T\in \mathcal{T}(G)}\prod_{e\in E(T)}\omega_{e}(G),
 \end{equation}
 which is also called the {\it Kirchhoff polynomial} \cite{C2} or  the {\it weighted spanning tree enumerator}  \cite{M2}  of $G$.
 If $\omega_{e}(G)=1$ for each edge $e\in E(G)$, then $\tau(G,\omega)=|\mathcal{T}(G)|$ is the number of spanning trees in $G$, and we write $\tau(G,\omega)$ simply as  $\tau(G)$ in this case.

Let $G$ be a graph of order $n$ and  $\{x_i:v_i\in V(G)\}$ be  indeterminates   on $V(G)$. If each edge $v_iv_j$ in $G$ has  weight $\omega_{v_iv_j}(G)=x_{i}x_{j}$, then $G$ is a {\it weighted graph induced by its vertex
weights} or {\it the weights of $G$ are induced by} $\{x_{i}:v_i\in V(G)\}$.  For the  graph $G$  with $v\in V(G)$, let $N_{G}(v)$ be the set of vertices adjacent to $v$ in $G$, where $d_{G}(v)=|N_{G}(v)|$ is the degree of $v$ in $G$. Denote by $\delta(G)$ and $\Delta(G)$  the  minimum and maximum degrees,  respectively,  of the vertices in $G$. Recalling  the definition of $\tau(G,\omega)$ in \eqref{x1-1}, if  the weights of $G$ are induced by indeterminates $\{x_{i}:v_i\in V(G)\}$, then we denote $\tau(G,\omega)$ by a {\it multivariable polynomial} $P_G(x_1,x_2,\ldots,x_n)$ \cite{L1} associated with $G$ and these indeterminates. That is,
$$P_G(x_1,x_2,\ldots,x_n)=\tau(G,\omega)=\sum_{T\in \mathcal{T}(G)}\prod_{v_iv_j\in E(T)}x_{i}x_{j}=\sum_{T\in \mathcal{T}(G)}\prod_{v_i\in V(G)}x_i^{d_T(v_i)}.$$

The enumeration of spanning trees in a graph is  an important and popular problem in graph theory having connections with many other fields  including  theoretical physics, computer science and so on. There are several methods for counting spanning trees in graphs such as the Matrix-Tree theorem  \cite{B1,K3} and the  electrical network  transformations \cite{T1}. Some scholars  investigated the number of spanning trees in regular graphs \cite{A2,M3,P1}, graphs with symmetry \cite{Y1}, line graphs \cite{D2,G2}, almost complete multipartite graphs \cite{C4}, the $K_n$-complement of  bipartite graphs \cite{G1}, Ferrers graphs \cite{E1} and  graphs with a given maximum degree \cite{D3}. Meanwhile, other scholars studied the number of spanning trees containing a given spanning forest in graphs  \cite{D1,L2,L3,Y2}.  Notably, Klee and Stamps \cite{K1} and Zhou and Bu \cite{Z3} gave some explicit formulas for enumerating spanning trees in  weighted graphs  with weights induced by $\{x_{i}:v_i\in V(G)\}$, which will be fully applied in our work.

A spanning tree $T$ in $G$ is called {\it homeomorphically irreducible spanning tree},  or HIST  for short, if $T$ contains no vertex of degree two. In particular, an {\it  odd spanning tree} in a connected graph  is a spanning tree in which  every vertex  has odd degree. Obviously, an odd spanning tree must be a HIST.   The existence of a HIST in a graph is extensively studied in the literature \cite{A1,C1,H1,I11,S1}.  In particular,  Albertson, Berman, Hutchinson and Thomassen \cite{A1} showed that every connected graph of order $n$ with $\delta(G)\geq \min\{\frac{n}{2},4\sqrt{2n}\}$ contains a HIST. Subsequently, Zheng and Wu \cite{Z1}  established the following result.

\begin{theorem}[\hspace{1sp}{\cite{Z1}}]\label{x1-1}
 Let $n$ be a positive even number. If $G$ is a connected graph of order $n$ with $\delta(G)\geq\frac{n}{2}+1$, then $G$ has an odd spanning tree.
\end{theorem}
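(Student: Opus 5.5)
We argue by extremality over odd spanning forests. Since $n$ is even and $G$ is connected, a standard $T$-join argument with $T=V(G)$ gives a spanning subgraph in which every vertex has odd degree. Taking a spanning forest of a suitable minimal such subgraph, or arguing directly, we obtain an \emph{odd spanning forest}: a spanning forest in which every vertex has odd degree. Every component of such a forest is an odd tree, so it has even order, because the number of odd-degree vertices of a graph is even. Among all odd spanning forests of $G$, choose one, $F$, with the minimum number of components. If $F$ is connected, it is the desired odd spanning tree. So suppose $F$ has at least two components, and let $C$ be one of smallest order, so $|C|\le n/2$.

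The main tool is a parity-preserving exchange. Suppose $ab\in E(C)$ and $x\notin V(C)$ is adjacent in $G$ to both $a$ and $b$. Form $F'=F-ab+ax+bx$. Then $d(a)$ and $d(b)$ are unchanged and $d(x)$ increases by $2$, so all degrees remain odd. Moreover, deleting $ab$ splits $C$ into two trees, and both are reattached through $x$ to the component containing $x$. Thus $F'$ is an odd spanning forest with fewer components, contradicting the choice of $F$. Similar exchanges should be available as fallbacks: replace an edge $xy$ of another component $D$ by $xa$ and $ya$ for some $a\in V(C)$, or use two edges $ab\in E(C)$ and $xy\in E(D)$ together with the cross edges $ax$ and $by$ and a second pair of cross edges, so that parities are restored and the component count drops. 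It therefore suffices to show that, when $\delta(G)\ge n/2+1$, at least one such configuration exists.

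The main obstacle is this counting step. A vertex $v\in V(C)$ has at least $n/2+2-|C|$ neighbours outside $C$. For a single edge $ab$, the naive bound guarantees a common outside neighbour only when $|C|$ is small. The plan is therefore to exploit all edges simultaneously.
\begin{itemize}
\item Assume no vertex outside $C$ is adjacent to both ends of any edge of $C$. Then for each $x\notin V(C)$, the set $N(x)\cap V(C)$ is independent in the tree $C$.
\item Symmetrically, for each $a\in V(C)$, the set $N(a)\cap V(D)$ is independent in $D$, for every other component $D$.
\item An independent set in a tree with $|C|$ vertices that spans no edge is constrained. Combining these constraints with $\delta(G)\ge n/2+1$, we count edges between $V(C)$ and $V(G)\setminus V(C)$ in two ways.
\end{itemize}
The aim is to force either a vertex outside $C$ with at least $|C|/2+1$ neighbours in $C$, which must then contain both ends of some edge of the tree $C$ by a bipartition argument, or a vertex of $C$ with too many neighbours inside $C$. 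If both direct exchanges fail, I would fall back on the two-edge exchange between $C$ and a second component $D$ and repeat the counting. I expect the tightness of the bound $n/2+1$, rather than $n/2$, to enter exactly at this point.
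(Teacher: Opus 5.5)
The paper does not prove this statement; it only quotes it from Zheng and Wu. So your proposal must stand on its own, and it does not. The setup is correct: a minimal $V(G)$-join is an odd spanning forest, the smallest component has at most $n/2$ vertices, and the single-edge exchange $F-ab+ax+bx$ is valid. But the step that carries the whole theorem is only announced, not proved. That step is: a disconnected odd spanning forest with the minimum number of components always admits an improving exchange when $\delta(G)\ge \frac n2+1$.

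The one concrete mechanism you offer for that step is false. A set of at least $|C|/2+1$ vertices of a tree $C$ need not contain an edge. Bipartiteness gives only the \emph{lower} bound $\alpha(C)\ge |C|/2$, and the odd tree $K_{1,|C|-1}$ has the independent set of its $|C|-1$ leaves. Likewise, a vertex of $C$ with many neighbours inside $C$ is no contradiction, since edges of $G$ inside a component are unconstrained. Your two-edge fallback cannot work as described either. A spanning forest with one fewer component has exactly one more edge, so an improving exchange must add one edge more than it deletes. Deleting $ab,xy$ and adding $ax,by$ leaves two components. Adding a second pair of cross edges puts $|C|+|D|$ edges on $|C|+|D|$ vertices, which creates a cycle.

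More importantly, the single-edge exchanges genuinely do not suffice at $\delta(G)=\frac n2+1$, so no counting argument built on them alone can succeed. Take $n=10$ with the following forest and graph:
\begin{itemize}
\item $C$ is the star with centre $u$ and leaves $a_1,a_2,a_3$.
\item $D$ is the double star with adjacent centres $p,q$, where $p$ has leaves $s_1,s_2$ and $q$ has leaves $t_1,t_2$.
\item $G$ consists of $F=C\cup D$, a clique on $\{a_1,a_2,a_3\}$, a clique on $V(D)$, the edges $up,ut_1,ut_2$, and all nine edges between $\{a_1,a_2,a_3\}$ and $\{q,s_1,s_2\}$.
\end{itemize}
Then $\delta(G)=6=\frac n2+1$. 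In $D$, the vertex $u$ sees only $\{p,t_1,t_2\}$ and each $a_i$ sees only $\{q,s_1,s_2\}$, and every edge of $D$ joins these two sets. Hence no vertex of either component is adjacent to both ends of an edge of the other.

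Yet $G$ has an odd spanning tree, reached only after a component-preserving rearrangement. Replacing $ua_2,ua_3$ by $qa_2,qa_3$ gives an odd spanning forest with components $\{u,a_1\}$ and $V(G)\setminus\{u,a_1\}$. Now $a_1$ is adjacent to both ends of $qa_2$, and the single-edge exchange finishes.

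So what is missing is a secondary extremal choice or a multi-step exchange argument. One possibility is to first make the smallest component as small as possible: when $|C|=2$, its two ends have at least two common neighbours outside $C$. Proving that such a reduction always exists under $\delta(G)\ge\frac n2+1$ is exactly the part of the theorem your proposal leaves undone.
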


According to Theorem \ref{x1-1}, there are  many graphs having at least one odd spanning tree. Inspired by the aforementioned results on spanning tree enumeration, a natural question is to investigate the exact number of  odd spanning trees in certain special graphs. Denote by  $\tau_o(G)$ the number of  odd spanning trees in a  graph  $G$. The well-known Cayley's formula \cite{C5} states that  $\tau(K_n)=n^{n-2}$, where $K_{n}$ is  the complete graph  on $n$ vertices. Recently,   Feng, Chen and Wu \cite{F1} derived an  explicit formula for enumerating   odd spanning trees in  $K_n$, employing the classical Pr\"{u}fer sequence and  an associated generating function.
\begin{theorem}[\hspace{1sp}{\cite{F1}}]\label{x1-2}
 For a complete graph $K_n$, we have
 $$\tau_o(K_n)=
 \begin{cases}
  0  &\mbox {\rm if $n$ is odd},\\
\frac{1}{2^{n}}\sum_{k=0}^{n}\binom{n}{k}(2k-n)^{n-2}&\mbox {\rm if $n$ is even}.\\
   \end{cases}$$
\end{theorem}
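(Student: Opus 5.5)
The plan is to use the vertex-weighted spanning tree polynomial $P_{K_n}(x_1,\ldots,x_n)$ together with a parity-extraction (roots of unity) filter. First, the case $n$ odd is immediate. The degree sum of any tree on $n$ vertices is $2(n-1)$, which is even. An odd spanning tree has all $n$ degrees odd, and a sum of $n$ odd numbers is odd when $n$ is odd. This is a contradiction, so $\tau_o(K_n)=0$.

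For $n$ even, I first need a closed form of $P_{K_n}$. From the Prüfer correspondence (or the weighted Matrix-Tree theorem of Klee--Stamps type cited in the introduction), each vertex $v_i$ appears $d_T(v_i)-1$ times in the Prüfer sequence. Hence
\[
P_{K_n}(x_1,\ldots,x_n)=\sum_{T}\prod_i x_i^{d_T(v_i)}=x_1x_2\cdots x_n\,(x_1+x_2+\cdots+x_n)^{n-2}.
\]
Next I extract the terms in which every exponent $d_T(v_i)$ is odd. Equivalently, I keep the monomials that are odd in each variable. The operator achieving this is
\[
f\mapsto \frac{1}{2^n}\sum_{\epsilon\in\{\pm1\}^n}\Big(\prod_i \epsilon_i\Big) f(\epsilon_1,\ldots,\epsilon_n),
\]
evaluated at all $x_i=\pm1$. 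The filter $\frac{1}{2}(f(1)-f(-1))$, applied in each variable, keeps exactly the odd powers, and evaluating at $\pm1$ replaces each surviving monomial by $1$. Therefore
\[
\tau_o(K_n)=\frac{1}{2^n}\sum_{\epsilon}\Big(\prod_i\epsilon_i\Big)\Big(\prod_i\epsilon_i\Big)\Big(\sum_i\epsilon_i\Big)^{n-2}.
\]

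Since $\prod_i\epsilon_i^2=1$, the summand depends only on the number $k$ of coordinates equal to $+1$. For such $\epsilon$ we have $\sum_i\epsilon_i=2k-n$, and there are $\binom{n}{k}$ such sign vectors. This gives $\tau_o(K_n)=\frac{1}{2^n}\sum_{k=0}^n\binom{n}{k}(2k-n)^{n-2}$, as claimed. The edge case $n=2$ is consistent: the formula gives $\frac14(1+1+1)$, which is wrong unless we interpret $0^0=1$; then $\frac14(1+2+1)=1$, which is correct. I would remark on this convention.

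The only genuinely nontrivial input is the closed form $x_1\cdots x_n(\sum x_i)^{n-2}$. I expect this to be the main step to justify carefully. I would derive it via Prüfer sequences, since the multiplicity of $v_i$ in the sequence is $d_T(v_i)-1$ and the map is a bijection onto $[n]^{n-2}$. Alternatively, I would compute the determinant of the reduced weighted Laplacian $\operatorname{diag}(x_i S)-xx^T$ with $S=\sum_i x_i$, using the matrix determinant lemma. The parity filter itself is routine.
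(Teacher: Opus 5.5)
Your proposal is correct and is essentially the paper's proof: apply the parity filter of Theorem~\ref{x2-2} to the closed form $P_{K_n}=x_1\cdots x_n(x_1+\cdots+x_n)^{n-2}$, collapse $(\prod_i\sigma_i)^2=1$, group sign vectors by the number $k$ of $+1$ entries, and handle odd $n$ by the handshake parity argument of Remark~\ref{odd-0}; the only difference is that the paper cites this closed form from Klee--Stamps rather than deriving it via Pr\"ufer codes or the matrix determinant lemma. Your $0^0=1$ remark for $n=2$ is worthwhile and the paper leaves it implicit (it is forced by evaluating the constant polynomial $(\sum_i x_i)^0=1$), but note that without this convention the middle term $\binom{2}{1}0^0$ vanishes and the sum reads $\frac14(1+0+1)$, not $\frac14(1+1+1)$.
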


In this paper we further study the enumeration of odd spanning trees in graphs. In Section $2$, we review the weighted Matrix-Tree theorem, which establishes a link between the multivariable polynomial $P_G(x_1,x_2,\ldots,x_n)$ of a graph $G$ and  a cofactor of its weighted Laplacian matrix. Using this   polynomial, we give  a unified technique
for counting odd spanning trees in general connected graphs. As applications, the closed enumerative formulas for odd spanning trees in complete graphs, complete multipartite graphs,  almost complete graphs, complete split graphs and Ferrers graphs are, respectively, derived in Section $3$. Finally, we provide two related problems in Section $4$.

\section{Counting odd spanning trees in  general graphs}
Suppose that $G$ is  a weighted graph with $V(G)=\{v_1,v_2,\ldots,v_n\}$ whose weights  are induced by indeterminates $\{x_{i}:v_i\in V(G)\}$.  The {\it weighted Laplacian matrix} $L_{G}$ of $G$ is an $n\times n$ matrix with entries
\begin{displaymath}
(L_{G})_{ij}=
   \begin{cases}
   x_i\sum_{v_k\in N_G(v_i)}x_k &\mbox {\rm if $v_i=v_j$},\\
 -x_ix_j &\mbox {\rm if $v_iv_j\in E(G)$},\\
 0  &\mbox {\rm otherwise}.
   \end{cases}
\end{displaymath}

Let $A(i,j)$ denote the submatrix of a matrix $A$ obtained by deleting the $i$-th row and $j$-th column in $A$. In what follows, we only present the Matrix-Tree theorem for the vertex-weighted version,  which expresses the multivariable polynomial $P_G(x_1,x_2,\ldots,x_n)$ via the determinant of $L_G(i, j)$, see   \cite{B1,L1,K3} for details.

\begin{theorem}[\hspace{1sp}{\cite{B1,L1,K3}}]\label{x2-1}
Let $G$ be a weighted graph with $V(G)=\{v_1,v_2,...,v_n\}$. Suppose that the weights of $G$ are induced by indeterminates $\{x_{i}:v_i\in V(G)\}$. For any (not necessarily
distinct) vertices $v_i,v_j\in V(G)$, we have $\det(L_{G}(i,j))=P_G(x_1,x_2,\ldots,x_n)$.
\end{theorem}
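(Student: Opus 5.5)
We will deduce Theorem \ref{x2-1} from the classical edge-weighted Matrix-Tree theorem. That theorem says that for a graph with edge weights $\omega_{ij}$ and Laplacian $\mathcal{L}$, with diagonal entries $\sum_{k}\omega_{ik}$ and off-diagonal entries $-\omega_{ij}$, every cofactor $(-1)^{i+j}\det\mathcal{L}(i,j)$ equals $\tau(G,\omega)$. Setting $\omega_{ij}=x_ix_j$ turns $\mathcal{L}$ into $L_G$, and then $\tau(G,\omega)=P_G(x_1,\ldots,x_n)$ by the definition of $P_G$. So most of the work is recalling why the classical theorem holds, together with one point about signs.

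For the principal case $i=j$ we use the Binet--Cauchy argument. Orient each edge arbitrarily and let $B$ be the $n\times m$ signed incidence matrix. Let $W=\mathrm{diag}(\omega_e)$. Then $L_G=BWB^{T}$. Delete row $i$ of $B$ to get $B_i$, so that $L_G(i,i)=B_iWB_i^{T}$. Binet--Cauchy then gives
$$\det L_G(i,i)=\sum_{S}\det(B_i[S])^2\prod_{e\in S}\omega_e,$$
where $S$ runs over the $(n-1)$-subsets of edges. The key lemma is that $\det B_i[S]=\pm1$ when $S$ is the edge set of a spanning tree, and $0$ otherwise. If $S$ contains a cycle, the corresponding columns are dependent. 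If $S$ is a spanning tree, we repeatedly expand along the row of a leaf different from $v_i$. This leaves exactly $\prod_{e\in T}\omega_e=\prod_{v}x_v^{d_T(v)}$ summed over $\mathcal{T}(G)$, which is $P_G$.

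For the off-diagonal case $i\ne j$ we use the fact that each row and column of $L_G$ sums to zero. Hence its adjugate has the form $c\,\mathbf{1}\mathbf{1}^{T}$, so all cofactors $(-1)^{i+j}\det L_G(i,j)$ are equal to the principal ones. A more direct route is to add all other columns to column $j$ in $L_G(i,i)$, which turns that column into minus the deleted column, and then permute columns.

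The main obstacle is the sign $(-1)^{i+j}$. Literally, $\det L_G(i,j)=(-1)^{i+j}P_G$, so the statement as written should be read as referring to the cofactor, or as holding up to sign. In the application only the principal minor $\det L_G(i,i)$ is needed, and there the identity is exact. I would state the theorem with the sign or restrict to $i=j$.
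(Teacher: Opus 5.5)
Your argument is correct. The paper gives no proof of this statement; it simply quotes it from the literature. Your incidence-matrix/Binet--Cauchy argument is the standard textbook proof of the Matrix-Tree theorem: you take $L_G=BWB^{\top}$ with $W=\operatorname{diag}(x_ix_j)$, and show $\det B_i[S]=\pm1$ or $0$ according to whether the $(n-1)$-edge set $S$ is a spanning tree. So there is no competing route to compare it with.

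Your sign observation is right, and it is a genuine correction to the statement as printed. Already for $G=K_2$ one has $\det L_G(1,2)=-x_1x_2$ while $P_{K_2}=x_1x_2$. The identity therefore holds for the cofactor $(-1)^{i+j}\det L_G(i,j)$, not for the minor itself. The paper only ever uses a principal minor (deleting the row and column of the last vertex in the proof of Theorem~\ref{xshd-1}), so nothing downstream is affected.

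For the off-diagonal case, keep your column-operation argument rather than the adjugate shortcut. The column argument yields $\det L_G(i,i)=(-1)^{i+j}\det L_G(i,j)$ directly from the zero row sums, over any commutative ring. By contrast, the claim $\operatorname{adj}(L_G)=c\,\mathbf{1}\mathbf{1}^{\top}$ needs an extra rank argument that you did not supply: the kernel is spanned by $\mathbf{1}$ when $G$ is connected, and $\operatorname{adj}(L_G)=0$ when the rank is at most $n-2$.
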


Let $\operatorname{adj}(A)$  be the {\it adjoint matrix} of a square matrix $A$.  We now review a well-known result in linear algebra, which is called the {\it matrix determinant lemma} or {\it Cauchy's formula for the determinant of a rank-one perturbation} \cite{H3}.
\begin{theorem}[\hspace{1sp}{\cite{H3}}]\label{BX}
Let $A$ be an $n\times n$ matrix and let $\mathbf{a}$ and $\mathbf{b}$ be two column vectors in $\mathbb{R}^{n}$. Then
$$\det(A+\mathbf{a}\mathbf{b}^{\top})=\det(A)+\mathbf{b}^{\top}\operatorname{adj}(A)\mathbf{a}.$$
In particular, if $A$ is  nonsingular, then $\det(A+\mathbf{a}\mathbf{b}^{\top})=\det(A)\left(1+\mathbf{b}^{\top}A^{-1}\mathbf{a}\right).$
\end{theorem}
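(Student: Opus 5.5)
We prove the matrix determinant lemma by reducing it to the nonsingular case and then extending to arbitrary $A$ by continuity (density).

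\textbf{Nonsingular case.} Assume $A$ is invertible. Factor
$$A+\mathbf{a}\mathbf{b}^{\top}=A\left(I+A^{-1}\mathbf{a}\mathbf{b}^{\top}\right).$$
So it suffices to show $\det(I+\mathbf{u}\mathbf{b}^{\top})=1+\mathbf{b}^{\top}\mathbf{u}$ for $\mathbf{u}=A^{-1}\mathbf{a}$. Consider the block identity
$$\begin{pmatrix} I & 0\\ \mathbf{b}^{\top} & 1\end{pmatrix}\begin{pmatrix} I+\mathbf{u}\mathbf{b}^{\top} & \mathbf{u}\\ 0 & 1\end{pmatrix}\begin{pmatrix} I & 0\\ -\mathbf{b}^{\top} & 1\end{pmatrix}=\begin{pmatrix} I & \mathbf{u}\\ 0 & 1+\mathbf{b}^{\top}\mathbf{u}\end{pmatrix}.$$
This is a direct block multiplication. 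The outer factors are unit triangular, and the middle and right-hand matrices are block triangular. Taking determinants gives the claim. Hence
$$\det(A+\mathbf{a}\mathbf{b}^{\top})=\det(A)\left(1+\mathbf{b}^{\top}A^{-1}\mathbf{a}\right),$$
which is the stated special case. Since $\operatorname{adj}(A)=\det(A)A^{-1}$, this equals $\det(A)+\mathbf{b}^{\top}\operatorname{adj}(A)\mathbf{a}$. So the general formula holds whenever $A$ is nonsingular.

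\textbf{General case.} Both sides of
$$\det(A+\mathbf{a}\mathbf{b}^{\top})=\det(A)+\mathbf{b}^{\top}\operatorname{adj}(A)\mathbf{a}$$
are polynomials in the entries of $A$ (with $\mathbf{a},\mathbf{b}$ fixed), because the entries of $\operatorname{adj}(A)$ are signed cofactors. Replace $A$ by $A+tI$. Then $\det(A+tI)\neq 0$ for all but finitely many $t$, so the identity holds for infinitely many $t$. A polynomial identity in $t$ that holds at infinitely many points holds identically, so we may set $t=0$. Equivalently, both sides are continuous in $A$ and agree on the dense set of invertible matrices. The same argument works over any field, or for matrices with polynomial entries, which is how the lemma is applied to Laplacians later.

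An alternative route avoids density entirely: expand $\det(A+\mathbf{a}\mathbf{b}^{\top})$ by multilinearity in the columns. Any term that takes two or more columns from the rank-one part vanishes. The terms taking exactly one column $a_i\mathbf{b}$-type give cofactor expansions summing to $\sum_{i,j} b_j\,(\operatorname{adj}A)_{ji}\,a_i$.

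\textbf{Main obstacle.} The only delicate point is the extension from nonsingular to singular $A$. The polynomial-identity argument above handles it cleanly.
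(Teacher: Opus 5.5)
The paper gives no proof of this statement. It is quoted from \cite{H3} and used only as a tool, in its nonsingular form, in the proof of Theorem~\ref{xshd-1}. So there is nothing to match, and your argument stands on its own.

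It is correct. The block product does multiply out to $\begin{pmatrix} I & \mathbf{u}\\ 0 & 1+\mathbf{b}^{\top}\mathbf{u}\end{pmatrix}$, which gives $\det(I+\mathbf{u}\mathbf{b}^{\top})=1+\mathbf{b}^{\top}\mathbf{u}$. The passage to singular $A$ is also sound. $\det(A+tI)$ is monic of degree $n$ in $t$, and both sides of the adjugate identity are polynomials in $t$ that agree off a finite set, so they agree at $t=0$.

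Your remark about polynomial entries is apt. The paper applies the lemma to $A_0=(S_\mathbf{x}+S_\mathbf{y})\operatorname{diag}(x_1,\ldots,x_m)$, whose entries are indeterminates, and then has to patch the singular specializations by a continuity argument.

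Of your two routes, the multilinear expansion is the more economical one:
\begin{itemize}
\item The expansion proves the adjugate form directly, for every $A$ over any commutative ring, and the nonsingular form then follows at once from $\operatorname{adj}(A)=\det(A)A^{-1}$.
\item The factor-then-extend route needs the extra density step. Over a finite field the phrase ``infinitely many $t$'' is not literally available. There you must treat $t$ as an indeterminate and work over $F(t)$, or appeal to a universal polynomial identity over $\mathbb{Z}$.
\end{itemize}

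One small slip in the alternative: if you expand in columns, the $j$-th column of $\mathbf{a}\mathbf{b}^{\top}$ is $b_j\mathbf{a}$. What you wrote, $a_i\mathbf{b}$, describes a row. The resulting sum $\sum_{i,j} b_j(\operatorname{adj}A)_{ji}a_i=\mathbf{b}^{\top}\operatorname{adj}(A)\mathbf{a}$ is nonetheless correct.
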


Note that the number of vertices with odd degree is even in any connected graph. Thus, every connected graph of odd order has no odd spanning trees.

\begin{Remark}\label{odd-0}
If $G$ is a connected graph of odd order, then $\tau_o(G)=0$.
\end{Remark}

Let $\{\pm1\}=\{+1,-1\}$ and  $\{\pm1\}^{n}$ be the set of all vectors  of dimension $n$ with each entry independently taking  $+1$ or $-1$. In what follows, we restrict our attention to graphs of even order.   We now state our main result, which gives a unified technique to count odd spanning trees in a general graph.

\begin{theorem}\label{x2-2}
Let $G$ be a connected graph of even order  $n$ and  $P_G(x_1,x_2,\ldots,x_n)$ be a multivariable polynomial associated with $G$ and  indeterminates  $\{x_i:v_i\in V(G)\}$. Then
$$\tau_o(G)=\frac{1}{2^{n}}\sum_{\boldsymbol{\sigma}\in\{\pm1\}^{n}}\left(\prod_{i=1}^{n}\sigma_i\right)P_G(\sigma_1,\sigma_2,\ldots,\sigma_n),$$
where the sum runs over all assignment vectors $\boldsymbol{\sigma}=(\sigma_1,\sigma_2,\ldots,\sigma_n)$ with each $\sigma_i$ independently taking  $+1$ or $-1$.
\end{theorem}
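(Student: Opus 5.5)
We prove the formula by a parity filter, i.e.\ a character sum over $\{\pm1\}^n$ that isolates the monomials in which every exponent is odd. By definition,
$$P_G(x_1,\ldots,x_n)=\sum_{T\in\mathcal{T}(G)}\prod_{i=1}^{n}x_i^{d_T(v_i)},$$
so it is enough to show, for each fixed spanning tree $T$ with degree sequence $d_i=d_T(v_i)$, that
$$\frac{1}{2^n}\sum_{\boldsymbol{\sigma}\in\{\pm1\}^n}\prod_{i=1}^n\sigma_i\cdot\prod_{i=1}^n\sigma_i^{d_i}=\begin{cases}1 & \text{if every } d_i \text{ is odd},\\ 0&\text{otherwise}.\end{cases}$$
Summing this identity over $T\in\mathcal{T}(G)$, and interchanging the finite sums over $T$ and over $\boldsymbol{\sigma}$, gives exactly the statement of Theorem \ref{x2-2}.

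The inner identity follows by factorization. The summand $\prod_{i}\sigma_i^{d_i+1}$ is a product of functions of the separate coordinates, and the sum runs over the product set $\{\pm1\}^n$. Hence
$$\sum_{\boldsymbol{\sigma}}\prod_{i=1}^n\sigma_i^{d_i+1}=\prod_{i=1}^n\left(1^{d_i+1}+(-1)^{d_i+1}\right)=\prod_{i=1}^n\left(1+(-1)^{d_i+1}\right).$$
Each factor equals $2$ when $d_i$ is odd and $0$ when $d_i$ is even. The product is therefore $2^n$ exactly when all degrees are odd, that is, when $T$ is an odd spanning tree, and $0$ otherwise. Dividing by $2^n$ gives the indicator of oddness.

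No step is a real obstacle; the only point needing care is that $P_G$ is evaluated at $x_i=\sigma_i\in\{\pm1\}$. This is legitimate because $P_G$ is a polynomial with integer coefficients, so substitution is termwise. We never use Theorem \ref{x2-1} in the proof, since it only provides a way to compute $P_G$ in later applications. The evenness of $n$ is not needed for the identity itself. For odd $n$ both sides vanish: the left side by Remark \ref{odd-0}, and the right side because every tree's parity indicator is then $0$. The hypothesis of even order thus only reflects the setting in which the formula is of interest.
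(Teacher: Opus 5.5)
Your proposal is correct and takes essentially the paper's route: expand $P_G$ into monomials, interchange the finite sums, and factor the sum over $\{\pm1\}^n$ into $\prod_{i=1}^n\bigl(1+(-1)^{d_i+1}\bigr)$, which equals $2^n$ when every $d_i$ is odd and $0$ otherwise. The only difference is cosmetic: you index directly by spanning trees, whereas the paper first groups trees by degree sequence with coefficients $c_{\mathbf{d}}$. Your side remark that the identity also holds for odd $n$, with both sides equal to zero, is also correct.
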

\begin{proof}
Consider the expansion of the polynomial $P_G(x_1,x_2,\ldots,x_n)$:
\begin{equation}\label{xh-4}
P_G(x_1,x_2,\ldots,x_n)=\sum_{T\in \mathcal{T}(G)}\prod_{v_i\in V(G)}x_i^{d_T(v_i)}=\sum_\mathbf{d}c_{\mathbf{d}}x_1^{d_1}x_2^{d_2}\cdots x_n^{d_n},
\end{equation}
where $\mathbf{d}=(d_1,d_2,\ldots,d_n)$ runs over all possible degree sequences, and the coefficient $c_\mathbf{d}$ is the number of spanning trees in $G$ whose degree sequence is $\mathbf{d}$. Observe that $\tau_o(G)$  is exactly the sum of those coefficients $c_{\mathbf{d}}$  for which every $d_i$ is odd. That is,
\begin{equation}\label{xh3-2}
\tau_o(G)=\sum_{\substack{\mathbf{d}=(d_1,d_2,\ldots,d_n),\\\mathrm{each}~d_i~\mathrm{is~odd}}}c_{\mathbf{d}}.
\end{equation}

Now we define
$$R=\frac{1}{2^n}\sum_{\boldsymbol{\sigma}\in\{\pm 1\}^n}\left(\prod_{i=1}^n\sigma_i\right)P_G(\sigma_1,\ldots,\sigma_n).$$
Substituting $P_G(\sigma_1,\sigma_2,\ldots,\sigma_n)$  with its expansion from Equation \eqref{xh-4} gives
\begin{equation}\label{xhf-1}
\begin{split}
R&=\frac{1}{2^n}\sum_{\boldsymbol{\sigma}\in\{\pm 1\}^n}\left(\prod_{i=1}^n\sigma_i\right)\sum_{\mathbf{d}}c_{\mathbf{d}}\prod_{i=1}^n\sigma_i^{d_i}\\
&=\frac{1}{2^n}\sum_{\mathbf{d}}c_{\mathbf{d}}\sum_{ \boldsymbol{\sigma}\in\{\pm 1\}^n}\prod_{i=1}^n\sigma_i^{d_i+1}\\
&=\frac{1}{2^n}\sum_{\mathbf{d}}c_{\mathbf{d}}\sum_{\sigma_1\in\{\pm 1\}}\sum_{\sigma_2\in\{\pm 1\}}\cdots\sum_{\sigma_n\in\{\pm 1\}}\prod_{i=1}^n\sigma_i^{d_i+1}\\
&=\frac{1}{2^n}\sum_{\mathbf{d}}c_{\mathbf{d}}\prod_{i=1}^n\left(\sum_{\sigma_i\in\{\pm 1\}}\sigma_i^{d_i+1}\right).
\end{split}
\end{equation}
For a fixed \(\mathbf{d}\), we compute each factor $\sum_{\sigma_i\in\{\pm 1\}}\sigma_i^{d_i+1}$. Consequently,
\begin{itemize}
    \item If $d_i$ is odd, then $d_i+1$ is even, so $\sigma_i^{d_i+1}=1$. Hence,
    $\sum_{\sigma_i\in\{\pm 1\}}\sigma_i^{d_i+1}=1+1=2$;
    \item If $d_i$ is even, then $d_i+1$ is odd, so $\sigma_i^{d_i+1}=\sigma_i$. Hence,
    $\sum_{\sigma_i\in\{\pm 1\}}\sigma_i^{d_i+1}=1+(-1)=0$.
\end{itemize}
Therefore,
\begin{equation}\label{xh-3}
\prod_{i=1}^n\left(\sum_{\sigma_i\in \{\pm 1\}}\sigma_i^{d_i+1}\right)=
\begin{cases}
2^n, & \text{if each }d_i\text{ is odd},\\[2mm]
0,   & \text{otherwise}.
\end{cases}
\end{equation}
Substituting with Equation \eqref{xh-3} the corresponding part in the expression of $R$ in \eqref{xhf-1} and combining it with Equation \eqref{xh3-2}, we obtain
$$
R=\frac{1}{2^n}\sum_{\substack{\mathbf{d}=(d_1,d_2,\ldots,d_n),\\\mathrm{each}~d_i~\mathrm{is~odd}}}2^{n}c_{\mathbf{d}}
= \sum_{\substack{\mathbf{d}=(d_1,d_2,\ldots,d_n),\\\mathrm{each}~d_i~\mathrm{is~odd}}}c_{\mathbf{d}}=\tau_{o}(G),
$$
the result follows  as desired.
\end{proof}

\section{Applications}
 In this section, by applying Theorem \ref{x2-2}, we obtain  explicit formulas for the number of odd spanning trees in some special classes of  graphs such as complete graphs, complete multipartite graphs,  almost complete graphs, complete split graphs and Ferrers graphs.

\subsection{Complete graphs}

Recall that the polynomial $P_G(x_1,x_2,\ldots,x_n)$ denotes the sum of weights of spanning trees in a graph $G$ whose weights  are induced by indeterminates $\{x_{i}:v_i\in V(G)\}$.  Now we state a result from \cite[Theorem 2]{K1}.
\begin{theorem}[\hspace{1sp}{\cite{K1}}]\label{x3-1}
   Suppose that  the weights of the complete graph $K_n$ are induced by indeterminates $\{x_{i}:v_i\in V(K_n)\}$. Then
$$P_{K_n}(x_1,x_2,\ldots,x_n)=x_1x_2\cdots x_n(x_1+x_2+\cdots+x_n)^{n-2}.$$
\end{theorem}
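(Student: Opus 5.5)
We need to prove Theorem \ref{x3-1}, the Klee–Stamps formula $P_{K_n}=x_1\cdots x_n(x_1+\cdots+x_n)^{n-2}$. The plan is to apply the weighted Matrix-Tree theorem (Theorem \ref{x2-1}) and evaluate the determinant with the matrix determinant lemma (Theorem \ref{BX}). Throughout, write $s=x_1+\cdots+x_n$.

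\textbf{Step 1: the Laplacian.} For $K_n$ the diagonal entries are $(L_{K_n})_{ii}=x_i(s-x_i)$ and the off-diagonal entries are $-x_ix_j$. With $D=\mathrm{diag}(x_1,\ldots,x_n)$ and $\mathbf{x}=(x_1,\ldots,x_n)^{\top}$, this gives
$$L_{K_n}=sD-\mathbf{x}\mathbf{x}^{\top}.$$

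\textbf{Step 2: the cofactor.} By Theorem \ref{x2-1}, $P_{K_n}=\det L_{K_n}(n,n)$. Deleting the last row and column gives
$$L_{K_n}(n,n)=sD'-\mathbf{x}'\mathbf{x}'^{\top},$$
where $D'=\mathrm{diag}(x_1,\ldots,x_{n-1})$ and $\mathbf{x}'=(x_1,\ldots,x_{n-1})^{\top}$. Work over the field of rational functions in the $x_i$, so that $sD'$ is nonsingular.

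\textbf{Step 3: the rank-one perturbation.} Apply Theorem \ref{BX} with $A=sD'$, $\mathbf{a}=-\mathbf{x}'$, $\mathbf{b}=\mathbf{x}'$. Then $\det A=s^{n-1}x_1\cdots x_{n-1}$. Since $(sD')^{-1}\mathbf{x}'=s^{-1}\mathbf{1}$, we get
$$\mathbf{b}^{\top}A^{-1}\mathbf{a}=-s^{-1}(x_1+\cdots+x_{n-1})=-(s-x_n)/s.$$
Hence
$$\det L_{K_n}(n,n)=s^{n-1}x_1\cdots x_{n-1}\cdot\frac{x_n}{s}=x_1\cdots x_n\,s^{n-2}.$$

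\textbf{Step 4: justification.} This is a polynomial identity between elements of $\mathbb{Z}[x_1,\ldots,x_n]$. Its verification over the rational function field is therefore legitimate, and it specializes to any values, in particular to $x_i=\pm1$, which is what the later application of Theorem \ref{x2-2} needs. The main care point is exactly this: passing through $A^{-1}$ requires $s\neq0$ and $x_i\neq0$, and working with indeterminates avoids the issue. Alternatively, the adjugate form of Theorem \ref{BX} needs no inverse at all.
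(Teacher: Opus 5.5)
Your proof is correct. The paper does not prove this statement itself (it is quoted from Klee and Stamps \cite{K1}), but your argument is exactly the method the paper uses in the proof of Theorem~\ref{xshd-1}. Indeed $K_n=CS(n-1,1)$: after deleting the row and column of the single vertex of $\overline{K_1}$, the block $D'$ is empty, and the paper's matrix-determinant-lemma computation of $\det(A)$ reduces verbatim to your Step~3, with $y_1$ in the role of $x_n$. Your way of handling nonsingularity, by working in $\mathbb{Q}(x_1,\ldots,x_n)$ or using the adjugate form, is a cleaner justification than the paper's appeal to continuity.
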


Next we provide  a new  proof of Theorem \ref{x1-2}.
\begin{proof}[\em\textbf {Proof of Theorem \ref{x1-2}}]
By  Theorem \ref{x3-1}, for any assignment vector $\boldsymbol{\sigma}=(\sigma_1,\sigma_2,\ldots,\sigma_n)\in \{\pm 1\}^{n}$ with even number $n$, we compute
\begin{equation}\label{x3-2}
\begin{split}
\left(\prod_{i=1}^{n}\sigma_i\right)P_{K_n}(\sigma_1,\sigma_2,\ldots,\sigma_n)=\left(\prod_{i=1}^{n}\sigma_i\right)^{2}\left(\sum_{i=1}^{n}\sigma_i\right)^{n-2}=\left(\sum_{i=1}^{n}\sigma_i\right)^{n-2}.
\end{split}
\end{equation}

Let $k$ be the number of $+1$s in $\boldsymbol{\sigma}$. Then the number of $-1$s in  $\boldsymbol{\sigma}$  is $n-k$. In this case,
\begin{equation}\label{x3-3}
\sum_{i=1}^{n}\sigma_i=k\cdot(+1)+(n-k)\cdot(-1)=2k-n.
\end{equation}
For a fixed nonnegative integer $k$ with $0\leq k\leq n$, there are $\binom{n}{k}$ such assignment vectors $\boldsymbol{\sigma}$  with exactly $k$ entries of $+1$. Consequently, by Theorem \ref{x2-2} and combining with Equations \eqref{x3-2} and  \eqref{x3-3}, we have
\begin{equation*}
\begin{split}
\tau_o(K_n)&=\frac{1}{2^{n}}\sum_{\boldsymbol{\sigma}\in\{\pm1\}^{n}}\left(\prod_{i=1}^{n}\sigma_i\right)P_{K_n}(\sigma_1,\sigma_2,\ldots,\sigma_n)\\
&=\frac{1}{2^{n}}\sum_{\boldsymbol{\sigma}\in\{\pm1\}^{n}}\left(\sum_{i=1}^{n}\sigma_i\right)^{n-2}\\
&=\frac{1}{2^{n}}\sum_{k=0}^{n}\binom{n}{k}(2k-n)^{n-2}.
\end{split}
\end{equation*}
The result follows  from the above with Remark \ref{odd-0}.
\end{proof}

\subsection{Complete multipartite graphs}

Let $G=K_{n_1,n_2,\ldots,n_k}$ be a {\it complete multipartite graph}  with  vertex set $V(G)$ partitioned as  $V_{1}\cup V_{2}\cup \cdots\cup V_{k}$ where  $|V_{i}|=n_{i}$ for $1\leq i\leq k$. Let $\sum_{i=1}^{k}n_i=n$  and order the vertices of $G$ so that $v_1,v_2,\ldots, v_{n_1}\in V_1$, $v_{n_1+1},v_{n_1+2},\ldots, v_{n_1+n_2}\in V_2$ and so on. Klee and  Stamps \cite[Theorem 3]{K1} gave a new proof of the following theorem.

\begin{theorem}[\hspace{1sp}{\cite{K1}}]\label{xsh-1}
   Let $G=K_{n_1,n_2,\ldots,n_k}$ be the complete multipartite graph with $n$ vertices and $k$ parts described above. Assume that  the weights of $G$ are induced by indeterminates $\{x_{i}:v_i\in V(G)\}$. Then
   $$P_G(x_1,x_2,\ldots,x_n)=\left(\prod_{i=1}^{n}x_i\right)\left(\prod_{\ell=1}^{k}\Big(\sum_{v_j\not\in V_\ell}x_j\Big)^{n_\ell-1}\right)\left(\sum_{i=1}^{n}x_i\right)^{k-2}.$$
\end{theorem}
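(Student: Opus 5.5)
We prove the formula for $P_G$ through the weighted Matrix-Tree theorem (Theorem \ref{x2-1}), computing the determinant by exploiting the block structure of $L_G$. Put $S=\sum_{i=1}^n x_i$ and, for each part, $S_\ell=\sum_{v_j\in V_\ell}x_j$. Then a vertex $v_i\in V_\ell$ has diagonal entry $x_i(S-S_\ell)$. Let $D$ be the diagonal matrix with $D_{ii}=x_i(S-S_\ell)+x_i^2$ for $v_i\in V_\ell$, let $B$ be the block-diagonal matrix whose $\ell$-th block is $\mathbf{x}_\ell\mathbf{x}_\ell^{\top}$, and write $\mathbf{x}=(x_1,\ldots,x_n)^{\top}$. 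With this notation
$$L_G=D+B-\mathbf{x}\mathbf{x}^{\top}.$$
The point is that $D+B$ is block diagonal and each block is itself a diagonal matrix plus a rank-one matrix.

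Instead of evaluating a cofactor directly, we use the standard trick that, since $L_G$ has zero row sums and Theorem \ref{x2-1} gives every cofactor equal to $P_G$, the rank-one update $\det(L_G+\mathbf{a}\mathbf{b}^{\top})$ equals $(\mathbf{b}^{\top}\mathbf{1})(\mathbf{1}^{\top}\mathbf{a})P_G$. This is obtained from Theorem \ref{BX}, since $\operatorname{adj}(L_G)=P_G\,\mathbf{1}\mathbf{1}^{\top}$ by Theorem \ref{x2-1}. Taking $\mathbf{a}=\mathbf{b}=\mathbf{x}$ yields
$$\det(D+B)=S^2\,P_G.$$
It therefore remains only to compute $\det(D+B)$, which factors over the parts. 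For the block of $V_\ell$, we pull out $\operatorname{diag}(x_i)$ and apply Theorem \ref{BX} once more:
$$\det\!\bigl(\operatorname{diag}(x_i(S-S_\ell+x_i))+\mathbf{x}_\ell\mathbf{x}_\ell^{\top}\bigr)?$$
The entries of $D$ written this way are wrong, so we correct them. The diagonal entry of $L_G$ is $x_i(S-S_\ell)$, and $B$ adds $x_i^2$ on its own diagonal. The block of $D+B$ restricted to $V_\ell$ is therefore $(S-S_\ell)\operatorname{diag}(x_i)+\mathbf{x}_\ell\mathbf{x}_\ell^{\top}$, where $D$ is now taken to be $\operatorname{diag}(x_i(S-S_\ell))$. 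Its determinant, again by Theorem \ref{BX}, is
$$\Bigl(\prod_{v_i\in V_\ell}x_i\Bigr)(S-S_\ell)^{n_\ell}\Bigl(1+\frac{S_\ell}{S-S_\ell}\Bigr)=\Bigl(\prod_{v_i\in V_\ell}x_i\Bigr)(S-S_\ell)^{n_\ell-1}S.$$
Multiplying over the $k$ parts gives
$$\det(D+B)=\Bigl(\prod_i x_i\Bigr)\prod_\ell (S-S_\ell)^{n_\ell-1}\,S^k.$$
Dividing by $S^2$ then gives exactly the claimed formula, because $S-S_\ell=\sum_{v_j\notin V_\ell}x_j$.

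The main obstacle is justifying the identity $\det(L_G+\mathbf{x}\mathbf{x}^{\top})=S^2P_G$. This requires the adjugate of a singular Laplacian, so we invoke the first (adjugate) form of Theorem \ref{BX} together with the statement in Theorem \ref{x2-1} that all cofactors, including off-diagonal ones, equal $P_G$; off-diagonal cofactors carry the sign $(-1)^{i+j}$, so we will check that this sign is built into that statement. The divisions by $S-S_\ell$ and $S$ are legitimate identities of rational functions, which are then cleared to get the polynomial identity.
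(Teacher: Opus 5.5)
Your argument is correct once the write-up is cleaned up (see below), and it takes a genuinely different route from anything in the paper. The paper gives no proof of this statement at all: it is quoted from \cite{K1}. Its own closest computation, the proof of Theorem~\ref{xshd-1} for $CS(m,n)$, works with a \emph{reduced} Laplacian. It deletes the row and column of one vertex, takes a Schur complement with respect to the diagonal block $D'$, applies Theorem~\ref{BX} once to the resulting diagonal-minus-rank-one matrix, and then removes the generic nonsingularity assumptions by a continuity argument.

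You instead keep the full Laplacian and use $\det(L_G+\mathbf{x}\mathbf{x}^{\top})=S^2P_G$. This makes the matrix block diagonal with blocks $(S-S_\ell)\operatorname{diag}(\mathbf{x}_\ell)+\mathbf{x}_\ell\mathbf{x}_\ell^{\top}$, so all $k$ parts are treated symmetrically; deleting a vertex would single out one part and force a Schur complement. If you use the adjugate form of Theorem~\ref{BX} on each block, giving $\bigl(\prod_{v_i\in V_\ell}x_i\bigr)\bigl((S-S_\ell)^{n_\ell}+S_\ell(S-S_\ell)^{n_\ell-1}\bigr)$, no division or continuity argument is needed. The only cancellation is of $S^2$ in the polynomial ring, which is legitimate for $k\ge 2$. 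The price is that you need \emph{every} cofactor of $L_G$, not only a diagonal one.

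On that point your caution is justified. Theorem~\ref{x2-1} as literally stated is false when $i+j$ is odd: already for $K_2$ one has $\det L_G(1,2)=-x_1x_2$. So cite the signed form $(-1)^{i+j}\det L_G(i,j)=P_G$. Alternatively, derive $\operatorname{adj}(L_G)=P_G\mathbf{1}\mathbf{1}^{\top}$ from a diagonal cofactor alone: use $L_G\operatorname{adj}(L_G)=\operatorname{adj}(L_G)L_G=0$, together with the fact that $\ker L_G$ is spanned by $\mathbf{1}$ because some $(n-1)$-minor equals $P_G\neq 0$.

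Two edits to the write-up are also needed. Delete your first definition of $D$: with the extra $x_i^2$, the identity $L_G=D+B-\mathbf{x}\mathbf{x}^{\top}$ is false. Delete also the display ending in ``?''. With $D=\operatorname{diag}\bigl(x_i(S-S_\ell)\bigr)$ from the start, every step goes through.
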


Now we give the explicit formula for the number of odd spanning trees in a complete multipartite graph.
\begin{theorem}\label{xshxsh-1}
 For a complete multipartite  graph $G=K_{n_1,n_2,\ldots,n_k}$ with even $n=\sum_{i=1}^{k}n_i$ vertices and $k$ parts described above, we have
 $$\tau_o(G)=\frac{1}{2^{n}}\sum_{r_1=0}^{n_1}\sum_{r_2=0}^{n_2}\cdots\sum_{r_k=0}^{n_k}\left(\prod_{\ell=1}^{k}\binom{n_\ell}{r_\ell}\right)\left(\prod_{\ell=1}^{k}(M-m_\ell)^{n_{\ell}-1}\right)M^{k-2},$$
where $m_\ell=2r_\ell-n_\ell$ and $M=\sum_{\ell=1}^{k}m_\ell$ for  $1\leq \ell\leq k$.
\end{theorem}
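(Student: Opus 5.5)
The plan is to substitute $\boldsymbol{\sigma}\in\{\pm1\}^n$ into the closed form of Theorem \ref{xsh-1} and then apply Theorem \ref{x2-2}, mirroring the new proof of Theorem \ref{x1-2} given above for $K_n$. Since $n$ is even, Theorem \ref{x2-2} applies directly.

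\textbf{Step 1: evaluate the summand.} For each assignment vector $\boldsymbol{\sigma}$, Theorem \ref{xsh-1} gives
$$\left(\prod_{i=1}^{n}\sigma_i\right)P_G(\sigma_1,\ldots,\sigma_n)=\left(\prod_{i=1}^{n}\sigma_i\right)^{2}\prod_{\ell=1}^{k}\Big(\sum_{v_j\notin V_\ell}\sigma_j\Big)^{n_\ell-1}\Big(\sum_{i=1}^{n}\sigma_i\Big)^{k-2}.$$
The squared product equals $1$, so the sign factor disappears exactly as in \eqref{x3-2}.

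\textbf{Step 2: group by counts within parts.} For each $\ell$, let $r_\ell$ be the number of $+1$ entries of $\boldsymbol{\sigma}$ on the vertices of $V_\ell$. By the same computation as in \eqref{x3-3}, the sum of the $\sigma_j$ over $V_\ell$ equals $2r_\ell-n_\ell=m_\ell$. Hence the total sum is $\sum_i\sigma_i=M$. The sum over vertices outside $V_\ell$ is then $M-m_\ell$. The summand therefore depends only on $(r_1,\ldots,r_k)$, and equals
$$\prod_{\ell}(M-m_\ell)^{n_\ell-1}M^{k-2}.$$

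\textbf{Step 3: count vectors.} The number of $\boldsymbol{\sigma}$ realizing a given tuple $(r_1,\ldots,r_k)$ is $\prod_\ell\binom{n_\ell}{r_\ell}$, because the signs in each part are chosen independently. Substituting this into Theorem \ref{x2-2} yields the stated formula.

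\textbf{The main obstacle: interpreting $M^{k-2}$ at $k=1$.} This is only a subtlety. When $k=1$, the graph is edgeless, hence disconnected for $n\ge2$, so $\tau_o(G)=0$. In that case Theorem \ref{x2-2} (which assumes connectivity) does not apply, and the formula would involve $M^{-1}$. I would therefore assume $k\ge2$ tacitly, as Theorem \ref{xsh-1} does.

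A second point to settle is the convention $0^0=1$, which arises when $k=2$ or $n_\ell=1$. I would adopt it consistently with polynomial evaluation, so that evaluating the polynomial identity of Theorem \ref{xsh-1} at $\pm1$ is valid. This holds because evaluation is a ring homomorphism, and a monomial factor with exponent $0$ is the constant $1$.
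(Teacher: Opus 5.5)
Your proposal is correct and follows the paper's proof essentially step for step. You substitute $\boldsymbol{\sigma}$ into the Klee--Stamps formula (Theorem~\ref{xsh-1}), drop the factor $(\prod_i\sigma_i)^2=1$, group assignment vectors by the counts $r_\ell$ so the part sums are $m_\ell$ and the complementary sums are $M-m_\ell$, and weight each class by $\prod_\ell\binom{n_\ell}{r_\ell}$ before invoking Theorem~\ref{x2-2}; your side remarks on tacitly taking $k\ge 2$ and reading $0^0=1$ as polynomial evaluation are sound points the paper leaves implicit.
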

\begin{proof}
 By Theorem \ref{xsh-1}, for any assignment vector $\boldsymbol{\sigma}=(\sigma_1,\sigma_2,\ldots,\sigma_n)\in \{\pm 1\}^{n}$, we have
\begin{equation}\label{xsh-2}
\begin{split}
\left(\prod_{i=1}^{n}\sigma_i\right)P_{G}(\sigma_1,\sigma_2,\ldots,\sigma_n)&=\left(\prod_{i=1}^{n}\sigma_i\right)^{2}\left(\prod_{\ell=1}^{k}\Big(\sum_{v_j\not\in V_\ell}\sigma_j\Big)^{n_\ell-1}\right)\left(\sum_{i=1}^{n}\sigma_i\right)^{k-2}\\
&=\left(\prod_{\ell=1}^{k}\Big(\sum_{v_j\not\in V_\ell}\sigma_j\Big)^{n_\ell-1}\right)\left(\sum_{i=1}^{n}\sigma_i\right)^{k-2}.
\end{split}
\end{equation}

 Now we classify the assignment vector $\boldsymbol{\sigma} \in \{\pm 1\}^n$. Let each  $V_\ell$ have $r_\ell$ ($0\leq r_\ell\leq n_\ell$) vertices  with assigned $+1$, where $1\leq \ell\leq k$. Then the number of vertices in  $V_\ell$  with assigned $-1$ is $n_\ell-r_\ell$. Moreover, we define $m_\ell=2r_\ell-n_\ell $ and $M=\sum_{\ell=1}^{k}m_\ell$ for  $1\leq \ell\leq k$. Then
\begin{equation}\label{xsh-3}
\sum_{i=1}^{n}\sigma_i=\sum_{\ell=1}^{k}\left(r_\ell\cdot (+1)+(n_\ell-r_\ell)\cdot(-1)\right)=\sum_{\ell=1}^{k}\left(2r_\ell-n_\ell\right)=\sum_{\ell=1}^{k}m_\ell=M;
\end{equation}
\begin{equation}\label{xsh-4}
\sum_{v_j\not\in V_\ell}\sigma_j=\sum_{i=1}^{n}\sigma_i- \sum_{v_j\in V_\ell}\sigma_j  =M-m_\ell.
\end{equation}

In a fixed $k$-tuple $(r_1,r_2,\ldots,r_k)$ with  $0\leq r_\ell\leq n_\ell$ for each $\ell$, there are  $\prod_{\ell=1}^{k}\binom{n_\ell}{r_\ell}$   such assignment vectors $\boldsymbol{\sigma}$ that have exactly $r_\ell$ vertices assigned $+1$ in each $V_\ell$. By Theorem \ref{x2-2} and  Equations \eqref{xsh-2},  \eqref{xsh-3} and  \eqref{xsh-4},  we have
\begin{equation*}
\begin{split}
\tau_o(G)&=\frac{1}{2^{n}}\sum_{\boldsymbol{\sigma}\in\{\pm1\}^{n}}\left(\prod_{i=1}^{n}\sigma_i\right)P_G(\sigma_1,\sigma_2,\ldots,\sigma_n)\\
&=\frac{1}{2^{n}}\sum_{\boldsymbol{\sigma}\in\{\pm1\}^{n}}\left(\prod_{\ell=1}^{k}\Big(\sum_{v_j\not\in V_\ell}\sigma_j\Big)^{n_\ell-1}\right)\left(\sum_{i=1}^{n}\sigma_i\right)^{k-2}\\
&=\frac{1}{2^{n}}\sum_{\substack{(r_1,r_2,\ldots,r_k),\\\mathrm 0\leq r_\ell\leq n_\ell}}
\left(\prod_{\ell=1}^{k}\binom{n_\ell}{r_\ell}\right)\left(\prod_{\ell=1}^{k}(M-m_\ell)^{n_{\ell}-1}\right)M^{k-2}\\
&=\frac{1}{2^{n}}\sum_{r_1=0}^{n_1}\sum_{r_2=0}^{n_2}\cdots\sum_{r_k=0}^{n_k}\left(\prod_{\ell=1}^{k}\binom{n_\ell}{r_\ell}\right)\left(\prod_{\ell=1}^{k}(M-m_\ell)^{n_{\ell}-1}\right)M^{k-2},
\end{split}
\end{equation*}
 which completes the proof.
\end{proof}

Note that there are exactly two non-isomorphic unlabeled odd trees  of order $6$ as follows.
\begin{enumerate}[label=\text{(\alph*).}]
\item The  star $K_{1,5}$: A central vertex (degree $5$) connected to $5$ leaves;
\item The double star $DS(2,2)$: Two adjacent central vertices (both of degree $3$), each connected to $2$ leaves.
\end{enumerate}

Next, we provide a simple example for Theorem \ref{xshxsh-1}.
\begin{example}
For the complete tripartite graph $K_{2,2,2}$, there is no odd spanning tree  isomorphic to $K_{1,5}$ since $\Delta(K_{2,2,2})=4$. It is easy to see that there are $24$ odd spanning trees isomorphic to $DS(2,2)$ in $K_{2,2,2}$.

Moreover, applying Theorem  $\ref{xshxsh-1}$ yields
$$\tau_o(K_{2,2,2})=\frac{1}{2^{6}}\sum_{r_{1}=0}^{2}\sum_{r_{2}=0}^{2}\sum_{r_{3}=0}^{2}\left(\prod_{\ell=1}^{3}\binom{2}{r_\ell}\right) \left(\prod_{\ell=1}^{3}(M-m_\ell)\right)M=\frac{1}{64}\cdot 1536=24,$$
 where $m_\ell=2r_\ell-n_\ell$ and $M=\sum_{\ell=1}^{3}m_\ell$ for  $1\leq \ell\leq 3$. Clearly, this agrees with the result obtained by direct computation.
\end{example}

\subsection{Almost complete graphs}

A graph is called {\it almost complete}, denoted by $K_n-pK_2$, if it is obtained from the complete graph $K_n$ by deleting a matching of $p$ edges where $2p\leq n$. Zhou and Bu \cite[Example 3.11]{Z3} gave a  formula  for counting spanning trees
in $K_n-pK_2$.

\begin{theorem}[\hspace{1sp}{\cite{Z3}}]\label{xshz-1}
 Let $G=K_n-pK_2$ be obtained from $K_n$ by deleting a matching $\mathcal{M}$ of size $p$ with weights induced by indeterminates $\{x_{i}:v_i\in V(G)\}$. Then
$$P_G(x_1,x_2,\ldots,x_n)=\left(\prod_{i=1}^{n}x_i\right)\left(\sum_{i=1}^{n}x_i\right)^{n-p-2}\prod_{v_iv_j\in \mathcal{M}}\sum_{k\neq i,j}x_k.$$
\end{theorem}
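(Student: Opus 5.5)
The statement is the Zhou--Bu formula for $P_{K_n-pK_2}$. I would prove it from the weighted Matrix-Tree theorem (Theorem \ref{x2-1}) together with the matrix determinant lemma (Theorem \ref{BX}).

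Write $s=\sum_{i=1}^n x_i$ and $X=\operatorname{diag}(x_1,\ldots,x_n)$. Label the vertices so that the deleted matching is $\mathcal{M}=\{v_1v_2,v_3v_4,\ldots,v_{2p-1}v_{2p}\}$.

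\textbf{Step 1: factor the Laplacian.} For $G=K_n-pK_2$ one has $L_G=X(sI-J-B)X$. Here $J$ is the all-ones matrix, and $B$ is block diagonal with $p$ blocks $\begin{pmatrix} 1&-1\\-1&1\end{pmatrix}$ in the matched rows and columns and zeros elsewhere. To check this, look at the diagonal: a matched vertex $v_{2t-1}$ has $(L_G)_{2t-1,2t-1}=x_{2t-1}(s-x_{2t-1}-x_{2t})$, which equals $x_{2t-1}^2\bigl(s-1-1\bigr)\cdot$ the correct scaling after conjugation by $X$. The off-diagonal entries for matched pairs vanish, as required. Entries for unmatched vertices agree with those of $K_n$.

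It is cleaner to use the equivalent form $L_G=X\,(D-XJ)$, where $D$ is diagonal with $D_{ii}=\sum_{k\in N(v_i)}x_k$ and the matching entries are corrected. I would therefore work with $Q=\operatorname{diag}(d_i)+W-\mathbf{1}\mathbf{x}^\top$. Here $d_i=s$ for unmatched vertices and $d_i=s-x_j$ for a vertex matched to $v_j$. The matrix $W$ adds $+x_j$ at position $(i,j)$ and $x_i$ at $(i,i)$ for each matched pair, which cancels the rank-one term on the pair. Then $L_G=XQ$.

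\textbf{Step 2: avoid the cofactor.} Since the cofactor is awkward, I would use the standard trick
\[
\det(L_G+\mathbf{x}\mathbf{x}^\top)=n^{0}\cdot\Bigl(\textstyle\sum_i x_i\Bigr)^2 P_G,
\]
which follows because all cofactors equal $P_G$ by Theorem \ref{x2-1} and $\mathbf{x}^\top\operatorname{adj}(L_G)\mathbf{x}=P_G\,(\sum x_i)^2$. Now $L_G+\mathbf{x}\mathbf{x}^\top=X(C+\mathbf{1}\mathbf{x}^\top)$ with $C=L_G^{K_n}$-part removed. Concretely, $C$ is block diagonal: it equals $s$ on unmatched vertices, and on each matched pair $\{i,j\}$ it is the $2\times2$ block
\[
\begin{pmatrix} s-x_j & x_j\\ x_i & s-x_i\end{pmatrix}.
\]
The determinant of this block is $(s-x_i)(s-x_j)-x_ix_j=s(s-x_i-x_j)$.

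\textbf{Step 3: compute the determinant.} Multiplying the blocks gives
\[
\det(C+\mathbf{1}\mathbf{x}^\top)-\det(C)\cdot 0=\det(C)\bigl(1+\mathbf{x}^\top C^{-1}\mathbf{1}\bigr)-\det(C)
\]
must be handled by Theorem \ref{BX}. The key claim is that $C\mathbf{1}=s\mathbf{1}$ on every block, since each row of a block sums to $s$. Hence $C^{-1}\mathbf{1}=s^{-1}\mathbf{1}$ and $\mathbf{x}^\top C^{-1}\mathbf{1}=1$.

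This step needs care. The block matrix above already includes $\mathbf{1}\mathbf{x}^\top$ restricted to the pair, so I must recheck exactly which matrix gets the rank-one update. I would settle this by writing $L_G=X(sI-\mathbf{1}\mathbf{x}^\top+E)$, where $E$ restores the missing edges; $E$ is supported on the pairs with entries $\begin{pmatrix}-x_j&x_j\\x_i&-x_i\end{pmatrix}$. Then $L_G+\mathbf{x}\mathbf{x}^\top=X(sI+E)$, since $X\mathbf{1}\mathbf{x}^\top X^{-1}$-type terms combine after conjugation.

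Set $A=sI+E$. It is block diagonal with determinant $s^{n-2p}\prod_{\mathcal{M}}s(s-x_i-x_j)$. The factor $s-x_i-x_j$ equals $\sum_{k\neq i,j}x_k$. Hence
\[
s^2P_G=\Bigl(\prod_i x_i\Bigr)s^{n-p}\prod_{\mathcal{M}}\sum_{k\neq i,j}x_k,
\]
which gives the formula after dividing by $s^2$.

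\textbf{Main obstacle.} The hard part is the bookkeeping of left versus right scaling by $X$. Since $L_G=X(\cdots)X$ is symmetric, I expect the correct factorization to be $L_G+\mathbf{x}\mathbf{x}^\top=XA$ with $A$ nonsymmetric, and that needs verifying entrywise. I would first sanity-check the identity on $p=1$, $n=4$ before writing out the general case.
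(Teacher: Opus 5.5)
Your final argument is correct. It is also a genuinely different route from anything in the paper, which does not prove this theorem but imports it from \cite{Z3}.

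The paper's own technique for an analogous formula (Theorem \ref{xshd-1}) evaluates the minor $\det L_G(n,n)$ by a Schur complement plus Theorem \ref{BX}. It assumes $D'$ and $A_0$ are nonsingular and then removes that assumption by a continuity argument. You instead use the identity $\det(L_G+\mathbf{x}\mathbf{x}^\top)=s^2P_G$ (your notation: $s=\sum_i x_i$, $X=\operatorname{diag}(x_1,\ldots,x_n)$). This comes from the adjugate form of Theorem \ref{BX}, which is valid for the singular matrix $L_G$, together with $\operatorname{adj}(L_G)=P_G J$. No genericity assumption is needed, and the only division is by $s^2$ in the polynomial ring.

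For $\operatorname{adj}(L_G)=P_GJ$ you need the \emph{signed} cofactors $(-1)^{i+j}\det L_G(i,j)$ to equal $P_G$. That is the correct reading of Theorem \ref{x2-1}; the unsigned minors equal $(-1)^{i+j}P_G$. The same device also proves Theorem \ref{x3-1} in one line, since $L_{K_n}+\mathbf{x}\mathbf{x}^\top=sX$.

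However, the write-up contains several false intermediate claims that must be deleted, not left next to the correct version:
\begin{itemize}
\item Step 1's factorization $L_G=X(sI-J-B)X$ is wrong. For an unmatched vertex it gives diagonal entry $x_i^2(s-1)$ rather than $x_i(s-x_i)$.
\item In $Q$, the matrix $W$ must not add $x_i$ at $(i,i)$. Otherwise a matched vertex gets diagonal $x_i(s-x_j)$ instead of $x_i(s-x_i-x_j)$.
\item In Step 2 the correct relation is $L_G+\mathbf{x}\mathbf{x}^\top=XC$ with your block matrix $C=sI+E$, not $X(C+\mathbf{1}\mathbf{x}^\top)$. Following your Step 3 route with $\mathbf{x}^\top C^{-1}\mathbf{1}=1$ would give $2\det C$, a spurious factor of $2$.
\end{itemize}

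The version you settle on, $L_G=X(sI-\mathbf{1}\mathbf{x}^\top+E)$, needs no conjugation argument:
\begin{itemize}
\item $L_{K_n}=sX-\mathbf{x}\mathbf{x}^\top=X(sI-\mathbf{1}\mathbf{x}^\top)$.
\item Deleting $v_iv_j$ subtracts $x_ix_j(\mathbf{e}_i-\mathbf{e}_j)(\mathbf{e}_i-\mathbf{e}_j)^\top$ (with $\mathbf{e}_i$ the $i$-th standard basis vector). This is exactly $-XE$ on that pair.
\item Then $L_G+\mathbf{x}\mathbf{x}^\top=X(sI+E)$ because $X\mathbf{1}\mathbf{x}^\top=\mathbf{x}\mathbf{x}^\top$.
\end{itemize}

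Simpler still, do not factor out $X$ at all. The matrix $L_G+\mathbf{x}\mathbf{x}^\top$ has diagonal entry $x_is$ at an unmatched vertex and $x_i(s-x_j)$ at a vertex matched to $v_j$. Its entry at each matched pair is $x_ix_j$, and all other entries are $0$. So it is symmetric block diagonal, and its $2\times 2$ blocks have determinant $x_ix_j\,s\,(s-x_i-x_j)$. This gives $s^2P_G=(\prod_i x_i)\,s^{n-p}\prod_{v_iv_j\in\mathcal{M}}(s-x_i-x_j)$ immediately, and your ``main obstacle'' disappears.
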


Now we give the explicit formula for the number of odd spanning trees in an almost complete  graph.
\begin{theorem}\label{xshz-2}
 For an almost complete  graph $G=K_n-pK_2$  of even order $n$ obtained from $K_n$ by deleting a matching $\mathcal{M}$ of size $p$, we have
 $$
\tau_o(G)= \frac{1}{2^n} \sum_{\substack{a+b+c=p \\ 0 \le r \le n-2p}} \frac{p!\,2^{b}}{a!\,b!\,c!} \, \binom{n-2p}{r} \, S^{\,n-p-2} \, (S-2)^a \, S^b \, (S+2)^c,
$$
where  $S = 4a + 2b + 2r - n$ and $a,b,c,r$ are nonnegative integers.
\end{theorem}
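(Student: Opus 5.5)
We apply Theorem \ref{x2-2} to the closed form of Theorem \ref{xshz-1}, exactly as in the proofs for $K_n$ and $K_{n_1,\ldots,n_k}$. For $\boldsymbol{\sigma}\in\{\pm1\}^n$ the factor $\prod_i\sigma_i$ appears squared, so it disappears. Writing $\Sigma=\sum_{i=1}^n\sigma_i$, and noting that $\sum_{k\neq i,j}\sigma_k=\Sigma-\sigma_i-\sigma_j$, we obtain
$$\Big(\prod_{i=1}^n\sigma_i\Big)P_G(\sigma_1,\ldots,\sigma_n)=\Sigma^{\,n-p-2}\prod_{v_iv_j\in\mathcal{M}}\big(\Sigma-\sigma_i-\sigma_j\big).$$
The task is therefore to classify assignment vectors by the data on which this expression depends, and to count how many vectors fall in each class.

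Fix the classification as follows. Each of the $p$ matching edges $v_iv_j\in\mathcal{M}$ is of one of three types:
\begin{itemize}
\item type $a$: $\sigma_i=\sigma_j=+1$, so $\sigma_i+\sigma_j=2$;
\item type $b$: the two values differ, so $\sigma_i+\sigma_j=0$;
\item type $c$: $\sigma_i=\sigma_j=-1$, so $\sigma_i+\sigma_j=-2$.
\end{itemize}
Let $a,b,c$ denote the number of edges of each type, so $a+b+c=p$, and let $r$ be the number of $+1$'s among the $n-2p$ vertices not covered by $\mathcal{M}$.

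The number of $+1$ entries in $\boldsymbol{\sigma}$ is then $2a+b+r$. Hence
$$\Sigma=2(2a+b+r)-n=4a+2b+2r-n=S,$$
and the product over $\mathcal{M}$ becomes $(S-2)^a S^b (S+2)^c$.

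Now count the vectors realizing a given $(a,b,c,r)$:
\begin{itemize}
\item choosing which matching edges have which type gives the multinomial $\frac{p!}{a!\,b!\,c!}$;
\item each type-$b$ edge admits $2$ orientations, giving the factor $2^b$;
\item the unmatched vertices contribute $\binom{n-2p}{r}$.
\end{itemize}
Summing over all $a+b+c=p$ and $0\le r\le n-2p$ and dividing by $2^n$ gives the stated formula.

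There is no real obstacle here. The only points needing care are two checks. First, $\sigma_i^2=1$ removes the prefactor. Second, the counting factor $2^b$ is correct: type $a$ and type $c$ edges are determined uniquely by their type, while type $b$ edges are not. As a sanity check, the multiplicities must total $2^n$, which follows from $\sum\frac{p!}{a!b!c!}2^b=4^p$ together with $\sum_r\binom{n-2p}{r}=2^{n-2p}$.
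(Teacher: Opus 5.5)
Your proposal is correct and follows the paper's proof essentially step for step. You substitute $\boldsymbol{\sigma}$ into the formula of Theorem \ref{xshz-1} via Theorem \ref{x2-2}, classify the matching edges into the three types (with counts $a,b,c$) together with the count $r$ of $+1$'s on unmatched vertices, and weight each class by $\frac{p!}{a!\,b!\,c!}\,2^b\binom{n-2p}{r}$. Deriving $S$ from the total number $2a+b+r$ of $+1$ entries, and checking that the multiplicities sum to $4^p\cdot 2^{n-2p}=2^n$, are small additions the paper does not include.
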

\begin{proof}
 By Theorems \ref{x2-2} and \ref{xshz-1}, for any assignment vector $\boldsymbol{\sigma}=(\sigma_1,\sigma_2,\ldots,\sigma_n)\in \{\pm 1\}^{n}$, we have
 \begin{equation}\label{xshzj-3}
\begin{split}
\tau_o(G)&=\frac{1}{2^{n}}\sum_{\boldsymbol{\sigma}\in\{\pm1\}^{n}}\left(\prod_{i=1}^{n}\sigma_i\right)P_G(\sigma_1,\sigma_2,\ldots,\sigma_n)\\
&=\frac{1}{2^{n}}\sum_{\boldsymbol{\sigma}\in\{\pm1\}^{n}}\left(\prod_{i=1}^{n}\sigma_i\right)^{2}\left(\sum_{i=1}^{n}\sigma_i\right)^{n-p-2}\prod_{v_iv_j\in \mathcal{M}}\sum_{k\neq i,j}\sigma_k\\
&=\frac{1}{2^{n}}\sum_{\boldsymbol{\sigma}\in\{\pm1\}^{n}}S^{n-p-2}\prod_{v_iv_j\in \mathcal{M}}(S-\sigma_i-\sigma_j),
\end{split}
\end{equation}
where $S=\sum_{i=1}^{n}\sigma_i$.

 Now we classify the assignment vector $\boldsymbol{\sigma} \in \{\pm 1\}^n$. Recall that the matching $\mathcal{M}$ contains $p$ edges.  For each edge $v_iv_j \in \mathcal{M}$, there are three cases  for the pair $(\sigma_i, \sigma_j)$ as follows.
\begin{enumerate}[label=\text{(\roman*).},leftmargin=4em]
\item $(\sigma_i, \sigma_j)=(+1, +1)$:  in which case $S - \sigma_i - \sigma_j = S - 2$;
\item $(\sigma_i, \sigma_j)=(+1, -1)$ or $(-1, +1)$: in which case $S - \sigma_i - \sigma_j = S$;
\item $(\sigma_i, \sigma_j)=(-1, -1)$: in which case $S - \sigma_i - \sigma_j = S+2$.
\end{enumerate}
Let $a, b, c$ denote the number of edges in the three cases described above, respectively,  with $a + b + c = p$.  Moreover, let $r$ be the number of vertices not in $\mathcal{M}$ (there are $n - 2p$  vertices  not in $\mathcal{M}$) that are assigned the  value $+1$, with the rest assigned $-1$, where $0\leq r\leq n - 2p$. Then,  for a fixed quadruple $(a,b,c,r)$,
  \begin{equation}\label{xshz-4}
\prod_{v_iv_j\in \mathcal{M}}(S-\sigma_i-\sigma_j)=(S-2)^{a}S^{b}(S+2)^{c};
\end{equation}
\begin{equation}\label{xshz-5}
S=\sum_{i=1}^{n}\sigma_i=2a-2c+2r-n+2p=4a+2b+2r-n.
\end{equation}
Furthermore, the number of such assignment vectors $\boldsymbol{\sigma}$ with a given  quadruple $(a,b,c,r)$ is computed as follows:
\begin{itemize}
\item Select $a$ edges from the matching $\mathcal{M}$ to assign values of type (\romannumeral 1\relax), $b$ edges to assign  values of type (\romannumeral 2\relax) and $c$ edges to assign values of type (\romannumeral 3\relax). The number of such ways is given by the multinomial coefficient $\binom{p}{a,\,b,\,c}= \frac{p!}{a!\,b!\,c!}$;
\item  For the $b$ edges assigned different values, there are two ways to specify which endpoint is $+1$ for each edge, contributing a factor of $2^{b}$;
\item Choose $r$ vertices from the $n-2p$ vertices not in $\mathcal{M}$ to assign $+1$. The number of such ways is  $\binom{n-2p}{r}$.
\end{itemize}
Therefore, the total count of such assignment vectors $\boldsymbol{\sigma}$ is
\begin{equation}\label{xshz-6}
\frac{p!}{a!\,b!\,c!} \cdot 2^b \cdot \binom{n-2p}{r}.
\end{equation}

Substituting  with Equations \eqref{xshz-4}, \eqref{xshz-5} and \eqref{xshz-6}  into \eqref{xshzj-3} and summing over all possible quadruples $(a, b, c, r)$, we obtain
$$
\tau_o(G)= \frac{1}{2^n} \sum_{\substack{a+b+c=p\\ 0 \le r \le n-2p}}\, \frac{p!}{a!\,b!\,c!} \, 2^b \, \binom{n-2p}{r} \, S^{\,n-p-2} \, (S-2)^a \, S^b \, (S+2)^c,
$$
with $S = 4a + 2b + 2r - n$. This completes the proof.
\end{proof}

Next, we illustrate Theorem \ref{xshz-2} with a simple example.

\begin{example}
For the almost complete  graph $K_{6}-2K_2$,  there are $2$ odd spanning trees  isomorphic to $K_{1,5}$ and $38$ odd spanning trees  isomorphic to $DS(2,2)$.

Moreover, applying Theorem  $\ref{xshz-2}$ yields
$$\tau_o(K_{6}-2K_2)=
\frac{1}{2^{6}}\sum_{\substack{a+b+c=2\\ 0\leq r\leq 2}}
\frac{2!\,2^{b}}{a!\,b!\,c!}\,\binom{2}{r}
S^{2}(S-2)^{a}S^{b}(S+2)^{c}=\frac{1}{64}\cdot 2560=40,
$$
 where $S=4a+2b+2r-6$.
\end{example}

\subsection{Complete split graphs}

The {\it join} $G_{1}\vee G_{2}$ of two disjoint graphs $G_{1}$ and $G_{2}$ is the graph obtained from  $G_{1}$ and $G_{2}$ by joining each vertex in $G_1$ to each vertex in $G_2$. Let $\overline{G}$ denote the {\it complement} of a graph  $G$. A {\it split graph} is a graph whose vertices can be partitioned into a clique and an independent set. Denote by $CS(m,n)=K_m\vee \overline{K_n}$ the {\it complete split graph}  with bipartition $V(CS(m,n))=V(K_m)\cup V(\overline{K_n})$. We  give a formula for counting spanning trees in a weighted complete split graph as follows.

\begin{theorem}\label{xshd-1}
   Let $G=CS(m,n)$ be a complete split graph  with bipartition $V(K_m)\cup V(\overline{K_n})$, where $V(K_m)=\{u_1,u_2,\ldots,u_m\}$ and $V(\overline{K_n})=\{v_1,v_2,\ldots,v_n\}$. If  the weights of $G$ are induced by indeterminates $\{x_{i}:u_i\in V(K_m)\}\cup \{y_{j}:v_j\in V(\overline{K_n})\}$, then
   $$P_{G}(x_1,\ldots,x_m,y_1,\ldots,y_n)=\left( \prod_{i=1}^m x_i \right)\left( \prod_{j=1}^n y_j \right)\left(\sum_{i=1}^{m}x_i\right)^{n-1}\left(\sum_{i=1}^{m}x_i+\sum_{j=1}^{n}y_j\right)^{m-1}.$$
\end{theorem}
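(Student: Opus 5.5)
We compute $P_G$ for $G=CS(m,n)$ through the vertex-weighted Matrix-Tree theorem (Theorem \ref{x2-1}). The plan is to delete the row and column of a clique vertex, say $u_m$, and evaluate the remaining determinant as a rank-one perturbation of a block matrix, using Theorem \ref{BX}. Throughout, write $X=\sum_{i=1}^m x_i$ and $Y=\sum_{j=1}^n y_j$.

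\textbf{Step 1: the Laplacian.} The weighted Laplacian $L_G$ has the following entries.
\begin{itemize}
\item The diagonal entry at $u_i$ is $x_i(X-x_i+Y)$, and the diagonal entry at $v_j$ is $y_jX$.
\item All clique--clique and clique--independent off-diagonal entries are $-x_ix_k$ or $-x_iy_j$.
\item Independent--independent off-diagonal entries are $0$.
\end{itemize}
Put $\mathbf{z}=(x_1,\dots,x_m,y_1,\dots,y_n)^\top$. Then
$$L_G = D - \mathbf{z}\mathbf{z}^\top + \begin{pmatrix}0&0\\0&\operatorname{diag}(y_j^2)\end{pmatrix},$$
where $D=\operatorname{diag}\bigl(x_i(X+Y),\, y_jX\bigr)$. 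The correction term only affects the $v$-block, so there
$$D_{v_j}+y_j^2 = y_j(X+y_j).$$
This is awkward, because it is not proportional to $y_j$ with a uniform factor.

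A cleaner decomposition keeps the $v$-block diagonal. Set $A=\operatorname{diag}\bigl(x_i(X+Y)\bigr)_{i\le m}\oplus \operatorname{diag}(y_jX)_{j\le n}$ and $B=\begin{pmatrix}\mathbf{x}\mathbf{x}^\top & \mathbf{x}\mathbf{y}^\top\\ \mathbf{y}\mathbf{x}^\top & 0\end{pmatrix}$, so that $L_G=A-B$. The matrix $B$ has rank $2$ rather than rank $1$.

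\textbf{Step 2: two applications of Theorem \ref{BX}.} Instead of working with $B$ directly, we delete the row and column of $u_m$ and write $M=L_G(m,m)$ as
$$M = A' - \mathbf{z}'\mathbf{z}'^\top + \begin{pmatrix}0&0\\0&\operatorname{diag}(y_j^2)\end{pmatrix}.$$
Equivalently, $M=A''-\mathbf{z}'\mathbf{z}'^\top$ with
$$A''=\operatorname{diag}\bigl(x_i(X+Y)\bigr)_{i<m}\oplus\operatorname{diag}\bigl(y_j(X+y_j)\bigr)_j .$$
Since $A''$ is diagonal, Theorem \ref{BX} gives
$$\det M=\det A''\Bigl(1-\sum_{i<m}\frac{x_i}{X+Y}-\sum_j\frac{y_j}{X+y_j}\Bigr).$$
This already produces $\prod_{i<m} x_i\,(X+Y)^{m-1}\prod_j y_j$ times a correction factor. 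That correction is not of the claimed product form, which signals an error in the decomposition: the clique vertices $u_i$ have diagonal entry $x_i(X-x_i+Y)$, and $-\mathbf{z}\mathbf{z}^\top$ already supplies the $-x_i^2$ there. This is correct. For $v_j$, however, $-\mathbf{z}\mathbf{z}^\top$ supplies $-y_j^2$, and the correction cancels it, so $A''_{v_j}=y_jX$ with nonzero $v_j$–$v_k$ entries $-y_jy_k$ that must also be cancelled. The correction is therefore the full $\mathbf{y}\mathbf{y}^\top$, not its diagonal, so the perturbation is genuinely rank two.

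\textbf{Step 3: Schur complement instead of Theorem \ref{BX}.} The fix is to delete $u_m$ and take the Schur complement on the diagonal block $\operatorname{diag}(y_jX)$, which is easily inverted.

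First, the off-diagonal block has entries $-x_iy_j$. Its Schur contribution to the $u$-block is
$$\sum_j \frac{x_iy_j\,x_ky_j}{y_jX}=\frac{x_ix_kY}{X}.$$
Hence the Schur complement is
$$\operatorname{diag}\bigl(x_i(X+Y)\bigr)_{i<m}-\Bigl(1+\frac{Y}{X}\Bigr)\mathbf{x}'\mathbf{x}'^\top,$$
where $\mathbf{x}'=(x_1,\dots,x_{m-1})^\top$.

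Next, Theorem \ref{BX} evaluates the determinant of this rank-one perturbation:
$$\prod_{i<m}x_i\,(X+Y)^{m-1}\Bigl(1-\frac{X-x_m}{X}\Bigr)=\prod_{i<m}x_i\,(X+Y)^{m-1}\frac{x_m}{X}.$$

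Finally, multiplying by $\det\operatorname{diag}(y_jX)=X^n\prod_j y_j$ yields the claimed formula
$$\prod_{i=1}^m x_i\,\prod_{j=1}^n y_j\,X^{n-1}(X+Y)^{m-1}.$$

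The main obstacle is setting up the block structure correctly; Step 3 is the route to write up. Throughout, we treat the $x_i,y_j$ as indeterminates, so that every division is legitimate in the rational function field.
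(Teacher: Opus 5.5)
Your Step 3 is correct and is essentially the paper's proof: Matrix-Tree theorem, then a Schur complement on the invertible diagonal block $X\operatorname{diag}(y_j)$ of the independent set, then the matrix determinant lemma on a diagonal-minus-rank-one matrix. The only differences are small. You delete the clique vertex $u_m$ instead of $v_n$, so the rank-one factor comes out as $x_m/X$ rather than $y_n/(X+Y)$. Working in the rational function field also handles the singular cases more cleanly than the paper's continuity argument. In the write-up, omit the incorrect decompositions of Steps 1--2.
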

\begin{proof}
Let $\mathbf{x}=(x_1,x_2,\ldots,x_m)^{\top}$ and $\mathbf{y}=(y_1,y_2,\ldots,y_n)^{\top}$ be column vectors of dimension $m$ and $n$, respectively.  The weighted Laplacian matrix $L_G$ can be partitioned into the following block matrix corresponding to $V(K_m)$ and $V(\overline{K_n})$\,:
$$L_{G}=\left(\begin{matrix}
A &B\\
B^{\top} &D\\
\end{matrix}
\right),$$
where $A=(S_\mathbf{x}+S_\mathbf{y})\cdot \operatorname{diag}(x_1,x_2,\ldots,x_m)-\mathbf{x}\mathbf{x}^{\top}$, $D=S_\mathbf{x}\cdot \operatorname{diag}(y_1,y_2,\ldots,y_n)$ is a diagonal matrix and $B=-\mathbf{x}\mathbf{y}^{\top}$, with $S_\mathbf{x}=\sum_{i=1}^{m}x_i$ and $S_\mathbf{y}=\sum_{j=1}^{n}y_j$.

Deleting the row and column in $L_G$ corresponding to the last  vertex in  $\overline{K_n}$, we obtain the cofactor matrix $L_G(n,n)$. Let $\mathbf{y}' = (y_1, y_2,\ldots, y_{n-1})^\top$ and $S_{\mathbf{y}'}= \sum_{j=1}^{n-1} y_j$. Then
$$L_{G}(n,n)=\left(\begin{matrix}
A &B'\\
B'^{\top} &D'\\
\end{matrix}
\right),$$
 with $B'=-\mathbf{x}\mathbf{y}'^{\top}$ and $D'=S_\mathbf{x}\cdot\operatorname{diag}(y_1,y_2,\ldots,y_{n-1})$.

First, we assume that $D'$ is nonsingular. Using the block matrix determinant formula, we have
\begin{equation}\label{xshd-2}
\det(L_G(n,n)) = \det(D') \det(A - B' D'^{-1} B'^{\top}).
\end{equation}
By performing a routine computation, we have
\begin{equation*}
\begin{split}
B' D'^{-1}=\left(-\mathbf{x}\mathbf{y}'^{\top}\right)\frac{1}{S_\mathbf{x}}\cdot \operatorname{diag}(\frac{1}{y_1},\frac{1}{y_2},\ldots,\frac{1}{y_{n-1}})=-\frac{1}{S_\mathbf{x}}\mathbf{x}\mathbf{1}_{n-1}^{\top},
\end{split}
\end{equation*}
where   $\mathbf{1}_{n-1}$ denotes the  all-ones column vector of dimension $n-1$. Moreover,
\begin{equation*}
\begin{split}
B'D'^{-1}B'^\top=\left(-\frac{1}{S_\mathbf{x}}\mathbf{x}\mathbf{1}_{n-1}^{\top}\right)\left(-\mathbf{y}'\mathbf{x}^{\top}\right)=\frac{1}{S_\mathbf{x}}\mathbf{x}\left(\mathbf{1}_{n-1}^{\top}\mathbf{y}'\right)\mathbf{x}^{\top}=\frac{S_{\mathbf{y}'}}{S_\mathbf{x}}\mathbf{x}\mathbf{x}^{\top}.
 \end{split}
\end{equation*}
Consequently,
\begin{equation*}
\begin{split}
A - B' D'^{-1} B'^\top&=(S_\mathbf{x}+S_\mathbf{y})\cdot \operatorname{diag}(x_1,x_2,\ldots,x_m)-\frac{S_\mathbf{x}+S_{\mathbf{y}'}}{S_\mathbf{x}}\mathbf{x}\mathbf{x}^{\top}.
 \end{split}
\end{equation*}
Assume that $A_0=(S_\mathbf{x}+S_\mathbf{y})\cdot \operatorname{diag}(x_1,x_2,\ldots,x_m)$  is nonsingular.   By the matrix determinant lemma, we have
\begin{equation}\label{xshd-3}
\begin{split}
\det(A - B' D'^{-1} B'^{\top})&=\det(A_0)\left(1-\frac{S_\mathbf{x}+S_{\mathbf{y}'}}{S_\mathbf{x}}\mathbf{x}^{\top}A_0^{-1}\mathbf{x}\right)\\
&=(S_\mathbf{x}+S_\mathbf{y})^{m}\left(\prod_{i=1}^{m}x_i\right)\left(1-\frac{S_\mathbf{x}+S_{\mathbf{y}'}}{S_\mathbf{x}}\frac{S_\mathbf{x}}{S_\mathbf{x}+S_{\mathbf{y}}}\right)\\
&=(S_\mathbf{x}+S_\mathbf{y})^{m}\left(\prod_{i=1}^{m}x_i\right)\frac{y_n}{S_\mathbf{x}+S_{\mathbf{y}}}\\
&=y_n(S_\mathbf{x}+S_\mathbf{y})^{m-1}\prod_{i=1}^{m}x_i.
 \end{split}
\end{equation}

By Theorem \ref{x2-1} and combining with  Equations \eqref{xshd-2} and \eqref{xshd-3}, we have
\begin{equation}\label{xshd-4}
\begin{split}
P_{G}(x_1,\ldots,x_m,y_1,\ldots,y_n)=&\det(L_G(n,n))=\det(D') \det(A - B' D'^{-1} B'^{\top})\\
=&\left(S_\mathbf{x}^{n-1}\prod_{j=1}^{n-1}y_j\right)\left(y_n(S_\mathbf{x}+S_\mathbf{y})^{m-1}\prod_{i=1}^{m}x_i\right)\\
=&\left( \prod_{i=1}^m x_i \cdot\prod_{j=1}^n y_j \right)\left(\sum_{i=1}^{m}x_i\right)^{n-1}\left(\sum_{i=1}^{m}x_i+\sum_{j=1}^{n}y_j\right)^{m-1}.
 \end{split}
\end{equation}

In the discussion above, we assume that both $D'$ and $A_0$ are nonsingular. Note that  $P_G(x_1,\ldots,x_m,y_1,\ldots,y_n)$ is a polynomial function in variables $\{x_i\}_{1\leq i\leq m}$ and $\{y_j\}_{1\leq j\leq n}$,  and the result holds when  both $D'$ and $A_0$ are nonsingular. For the case where either  $D'$ or $A_0$ is singular, we have  four cases: $S_\mathbf{x}=0$, $S_\mathbf{x}+S_\mathbf{y}=0$, $x_i=0$ or $y_j=0$ for $1\leq i\leq m$ and $1\leq j\leq n$. Thus, it follows from the continuity of polynomial functions that  Equation \eqref{xshd-4}  is valid for all real assignments of the weights.
\end{proof}

Now we give the explicit formula for the number of odd spanning trees in a  complete split  graph.
\begin{theorem}\label{xshu-1}
  Let  $CS(m,n)$ be a complete split  graph  of even order $m+n$ with bipartition $V(K_m)\cup V(\overline{K_n})$, where $V(K_m)=\{u_1,u_2,\ldots,u_m\}$ and $V(\overline{K_n})=\{v_1,v_2,\ldots,v_n\}$. Then
 $$\tau_o(CS(m,n))=\frac{1}{2^{m+n}}\sum_{k=0}^{m}\sum_{\ell=0}^{n}\binom{m}{k}\binom{n}{\ell}\left(2k-m\right)^{n-1}\left( 2k+2\ell-m-n\right)^{m-1}.$$
\end{theorem}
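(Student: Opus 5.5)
The proof follows the pattern used for Theorem \ref{xshxsh-1}: substitute $\pm1$ values into the closed form of Theorem \ref{xshd-1} and apply Theorem \ref{x2-2}. Since $CS(m,n)$ is connected of even order $m+n$, Theorem \ref{x2-2} applies with the $m+n$ indeterminates $x_1,\ldots,x_m,y_1,\ldots,y_n$. Let $\boldsymbol{\sigma}=(\sigma_1,\ldots,\sigma_m,\sigma'_1,\ldots,\sigma'_n)\in\{\pm1\}^{m+n}$. Theorem \ref{xshd-1} gives
\begin{equation*}
\Big(\prod_{i=1}^m\sigma_i\prod_{j=1}^n\sigma'_j\Big)P_G(\boldsymbol{\sigma})=\Big(\prod_{i=1}^m\sigma_i\prod_{j=1}^n\sigma'_j\Big)^2\Big(\sum_{i=1}^m\sigma_i\Big)^{n-1}\Big(\sum_{i=1}^m\sigma_i+\sum_{j=1}^n\sigma'_j\Big)^{m-1}.
\end{equation*}
The squared product equals $1$, which leaves only the two power-sum factors.

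Next I would classify the assignment vectors. Let $k$ be the number of $+1$ entries among the clique coordinates $\sigma_1,\ldots,\sigma_m$, and let $\ell$ be the number of $+1$ entries among the independent-set coordinates $\sigma'_1,\ldots,\sigma'_n$, with $0\le k\le m$ and $0\le \ell\le n$. As in the proof of Theorem \ref{x1-2}, this gives $\sum_i\sigma_i=2k-m$ and $\sum_j\sigma'_j=2\ell-n$. The total sum is therefore $2k+2\ell-m-n$. The summand depends on $\boldsymbol{\sigma}$ only through $(k,\ell)$, and exactly $\binom{m}{k}\binom{n}{\ell}$ vectors realize a given pair. Summing over all pairs and multiplying by $2^{-(m+n)}$ yields the stated formula.

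No step is a real obstacle. The one point needing care is that Theorem \ref{xshd-1} must be valid when the indeterminates take $\pm1$ values, including choices where $S_{\mathbf{x}}=0$ or $S_{\mathbf{x}}+S_{\mathbf{y}}=0$. This holds because the identity in Theorem \ref{xshd-1} is a polynomial identity, established for all real assignments by the continuity argument at the end of its proof. A degenerate exponent such as $n-1=0$ or $m-1=0$ would produce $0^0$. I would interpret $0^0=1$, consistent with the polynomial identity.
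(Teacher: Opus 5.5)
Your proposal is correct and follows essentially the same route as the paper: substitute $\pm1$ into the closed form of Theorem~\ref{xshd-1} inside the formula of Theorem~\ref{x2-2}, observe that the squared product equals $1$, and group assignments by the numbers $k,\ell$ of $+1$ entries on the clique and on the independent set, each class containing $\binom{m}{k}\binom{n}{\ell}$ vectors. Your extra remarks, that the polynomial identity holds at $\pm1$ values and that $0^0=1$ is the right convention (needed, e.g., when $m=1$), are sensible refinements that the paper leaves implicit.
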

\begin{proof}
For any assignment vector $\boldsymbol{\sigma}=(\sigma_1,\sigma_2,\ldots,\sigma_{m+n})\in \{\pm 1\}^{m+n}$, we decompose it as $\sigma_i=\alpha_i$ for $1\leq i\leq m$ and $\sigma_{m+j}=\beta_j$ for $1\leq j\leq n$ such that $\boldsymbol{\alpha}=(\alpha_1,\alpha_2,\ldots,\alpha_{m})\in \{\pm 1\}^{m}$  and $\boldsymbol{\beta}=(\beta_1,\beta_2,\ldots,\beta_{n})\in \{\pm 1\}^{n}$ represent the weights assigned to the vertices in  $V(K_m)$ and $V(\overline{K_n})$, respectively. Letting $G=CS(m,n)$ and by Theorems \ref{x2-2} and \ref{xshd-1},  we have
\begin{equation}\label{xshu-2}
\begin{split}
\tau_o(G)&=\frac{1}{2^{m+n}}\sum_{\boldsymbol{\sigma}\in\{\pm1\}^{m+n}}\left(\prod_{i=1}^{m+n}\sigma_i\right)P_G(\sigma_1,\sigma_2,\ldots,\sigma_{m+n})\\
&=\frac{1}{2^{m+n}}\sum_{\boldsymbol{\sigma}\in\{\pm1\}^{m+n}}\left(\prod_{i=1}^{m}\alpha_i\cdot\prod_{j=1}^{n}\beta_j\right)^{2}\left(\sum_{i=1}^{m}\alpha_i\right)^{n-1}\left(\sum_{i=1}^{m}\alpha_i+\sum_{j=1}^{n}\beta_j\right)^{m-1}\\
&=\frac{1}{2^{m+n}}\sum_{\boldsymbol{\alpha}\in\{\pm1\}^{m},\boldsymbol{\beta}\in\{\pm1\}^{n}} \left(\sum_{i=1}^{m}\alpha_i\right)^{n-1}\left(\sum_{i=1}^{m}\alpha_i+\sum_{j=1}^{n}\beta_j\right)^{m-1}.\\
\end{split}
\end{equation}

Now we classify the assignment vector $\boldsymbol{\sigma} \in \{\pm 1\}^{m+n}$. Let $k$ and $\ell$ be the number of vertices in  $K_m$ and $\overline{K_n}$  that are assigned the value $+1$, respectively. In this case,
$$\sum_{i=1}^{m}\alpha_i=k\cdot (+1)+(m-k)\cdot(-1)=2k-m;$$
$$\sum_{i=1}^{m}\alpha_i+\sum_{j=1}^{n}\beta_j=(2k-m)+\ell\cdot (+1)+(n-\ell)\cdot(-1)=2k+2\ell-m-n.$$
 Moreover, for  fixed nonnegative integers $k$ and $\ell$ with $0\leq k\leq m$ and $0\leq \ell\leq n$, there are $\binom{m}{k}\binom{n}{\ell}$  assignment vectors $\boldsymbol{\sigma}$ that have exactly $k$ and $\ell$ vertices  with assigned $+1$ in  $K_m$ and $\overline{K_n}$, respectively. Thus, by   Equation \eqref{xshu-2}, we have
\begin{equation*}
\begin{split}
\tau_o(G)&=\frac{1}{2^{m+n}}\sum_{\boldsymbol{\alpha}\in\{\pm1\}^{m},\boldsymbol{\beta}\in\{\pm1\}^{n}} \left(\sum_{i=1}^{m}\alpha_i\right)^{n-1}\left(\sum_{i=1}^{m}\alpha_i+\sum_{j=1}^{n}\beta_j\right)^{m-1}\\
&=\frac{1}{2^{m+n}}\sum_{k=0}^{m}\sum_{\ell=0}^{n}\binom{m}{k}\binom{n}{\ell}\left(2k-m\right)^{n-1}\left( 2k+2\ell-m-n\right)^{m-1}.
\end{split}
\end{equation*}
This completes the proof.
\end{proof}

Next, we provide a simple example for Theorem \ref{xshu-1}.
\begin{example}
For the complete split graph $CS(3,3)$, there are $3$ odd spanning trees  isomorphic to $K_{1,5}$ and $27$ odd spanning trees  isomorphic to $DS(2,2)$.

Moreover, applying Theorem  $\ref{xshu-1}$ yields
$$\tau_o(CS(3,3))=\frac{1}{2^{6}}\sum_{k=0}^{3}\sum_{\ell=0}^{3}\binom{3}{k}\binom{3}{\ell}\left(2k-3\right)^{2}\left( 2k+2\ell-6\right)^{2}=\frac{1}{64}\cdot 1920=30.
$$
\end{example}

\subsection{Ferrers graphs}
In 2004, Ehrenborg and van Willigenburg \cite{E1} defined a class of bipartite graphs called Ferrers graphs. They used the theory of electrical networks to   give  a precise formula \cite[Theorem 2.1]{E1} for counting spanning trees in a weighted Ferrers graph.
\begin{definition}
A Ferrers graph is a connected bipartite graph $G$ whose vertices can be partitioned as $R\cup C$ with $R=\{r_1,r_2,\ldots,r_m\}$ and $C=\{c_1,c_2,\ldots,c_n\}$ such that
\begin{itemize}
\item $r_1c_{n}$ and $r_m c_1$ are edges in $G$, and
\item whenever $r_pc_q$ is an edge in $G$, then so is $r_ic_j$ for any $1\leq i \leq p$ and  $1\leq j \leq q$.
\end{itemize}
\end{definition}

A {\it partition} of a positive integer $z$ is an ordered list of positive integers whose sum is $z$.  We write  $\lambda = (\lambda_1,\lambda_2, \ldots, \lambda_m)$ with $\lambda_1\geq \lambda_2\geq\cdots\geq \lambda_m>0$ to denote the parts of the partition $\lambda$. For a Ferrers graph $G$ we have the associated partition $\lambda=(\lambda_1,\lambda_2, \ldots, \lambda_m)$, where $\lambda_i$ is the degree of the vertex $r_i$ for each $1\leq i\leq m$. Similarly, we have the {\it dual partition} $\lambda'=(\lambda_1',\lambda_2',\ldots,\lambda_n')$, where
$\lambda_j'$ is the degree of the vertex $c_j$ for each $1\leq j\leq n$.  The associated {\it Ferrers diagram} is  a diagram of boxes  with a box at position $(r_i,c_j)$ if and only if $r_ic_j\in E(G)$. That is, it is
a stack of left-justified boxes with $\lambda_1$ boxes in the first row, $\lambda_2$ boxes in the second row, and so on. Therefore, there exists a natural correspondence between Ferrers graphs and Ferrers diagrams, which also represent integer partitions. For example, $(5,5,3,2,1)$ is a partition of $16$, the Ferrers diagram and corresponding Ferrers graph associated to the partition $(5,5,3,2,1)$ are shown in Fig.\,$1$.

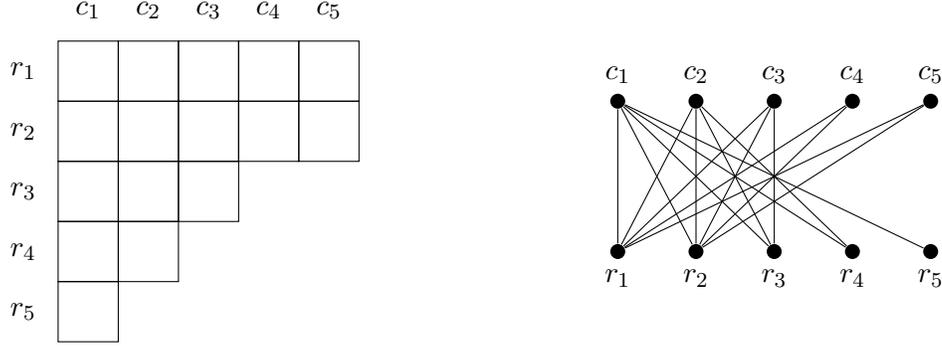
\begin{figure}[h]
\centering
\begin{tikzpicture}[scale=0.8]
\draw (0,0) rectangle (1,1);
\draw (1,0) rectangle (2,1);
\draw (2,0) rectangle (3,1);
\draw (3,0) rectangle (4,1);
\draw (4,0) rectangle (5,1);

\draw (0,-1) rectangle (1,0);
\draw (1,-1) rectangle (2,0);
\draw (2,-1) rectangle (3,0);
\draw (3,-1) rectangle (4,0);
\draw (4,-1) rectangle (5,0);

\draw (0,-2) rectangle (1,-1);
\draw (1,-2) rectangle (2,-1);
\draw (2,-2) rectangle (3,-1);

\draw (0,-3) rectangle (1,-2);
\draw (1,-3) rectangle (2,-2);

\draw (0,-4) rectangle (1,-3);

\node[left] at (-0.2, 0.5) {$r_1$};
\node[left] at (-0.2, -0.5) {$r_2$};
\node[left] at (-0.2, -1.5) {$r_3$};
\node[left] at (-0.2, -2.5) {$r_4$};
\node[left] at (-0.2, -3.5) {$r_5$};

\node[above] at (0.5, 1.2) {$c_1$};
\node[above] at (1.5, 1.2) {$c_2$};
\node[above] at (2.5, 1.2) {$c_3$};
\node[above] at (3.5, 1.2) {$c_4$};
\node[above] at (4.5, 1.2) {$c_5$};

\pgfmathsetmacro{\shift}{8}

\foreach \i/\label in {1/c_1,2/c_2,3/c_3,4/c_4,5/c_5} {
    \node[circle,fill=black,inner sep=2pt] (c\i) at (\shift+\i*1.3,0) {};
    \node[above] at (c\i.north) {$\label$};
}

\foreach \i/\label in {1/r_1,2/r_2,3/r_3,4/r_4,5/r_5} {
    \node[circle,fill=black,inner sep=2pt] (r\i) at (\shift+\i*1.3,-2.5) {};
    \node[below] at (r\i.south) {$\label$};
}

\foreach \i in {1,...,5} {
    \draw (c1) -- (r\i);
}

\foreach \i in {1,...,4} {
    \draw (c2) -- (r\i);
}

\foreach \i in {1,...,3} {
    \draw (c3) -- (r\i);
}

\foreach \i in {1,2} {
    \draw (c4) -- (r\i);
}

\foreach \i in {1,2} {
    \draw (c5) -- (r\i);
}

\end{tikzpicture}
\caption*{\footnotesize\textbf{Fig.\,1.} The Ferrers diagram (left) and Ferrers graph (right) corresponding to the partition $(5,5,3,2,1)$.}
\end{figure}

\begin{theorem}[\hspace{1sp}{\cite{K1,E1}}]\label{thm:ferrers}
Let $G$ be a Ferrers graph whose vertices are partitioned as $V(G) = R \cup C$ with $|R| = m$ and $|C| = n$. Suppose that $G$ corresponds to the partition $\lambda = (\lambda_1,\lambda_2, \ldots, \lambda_m)$ whose dual partition is $\lambda' = (\lambda'_1, \lambda'_2, \ldots, \lambda'_n)$. If  the weights of $G$ are induced by indeterminates $\{x_{i}:r_i\in R\}\cup \{y_{j}:c_j\in C\}$, then
\begin{equation*}
\begin{split}
 &P_G(x_1,\ldots,x_m,y_1,\ldots,y_n)
 = \left( \prod_{i=1}^m x_i\right) \left(\prod_{j=1}^n y_j \right) \left( \prod_{i=2}^m \Big( \sum_{j=1}^{\lambda_i} y_j \Big) \right)  \left( \prod_{j=2}^n \Big( \sum_{i=1}^{\lambda'_j} x_i \Big) \right).
 \end{split}
\end{equation*}
\end{theorem}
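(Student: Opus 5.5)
We prove the identity with the weighted Matrix-Tree theorem (Theorem \ref{x2-1}), inducting on the number of boxes of the Ferrers diagram. This follows the Schur-complement style used for Theorem \ref{xshd-1}. Write $F_\lambda$ for the right-hand side. A cleaner route is to remove a single vertex of degree one whenever possible, or otherwise a corner box, and track how both $P_G$ and $F_\lambda$ change.

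\textbf{Step 1 (leaf removal).} If $\lambda_m=1$, then $r_m$ is a leaf attached only to $c_1$. Every spanning tree of $G$ contains the edge $r_mc_1$, so
$$P_G=x_my_1\,P_{G-r_m}.$$
On the formula side, removing the last row deletes the factor $\sum_{j=1}^{\lambda_m}y_j=y_1$ from the second product. It also decreases $\lambda'_1$ by one, but $j=1$ is excluded from the last product, so that product is unchanged. Hence $F_\lambda=x_my_1F_{\lambda\setminus\text{row }m}$. The same argument handles $\lambda'_n=1$ by symmetry.

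\textbf{Step 2 (general corner).} Suppose neither leaf case applies. I will use deletion–contraction on the corner edge $e=r_mc_{\lambda_m}$; the companion identity is
$$P_G=P_{G-e}+x_my_{\lambda_m}\,P_{G/e},$$
where contracting $e$ merges its two ends into a vertex carrying the product weight. The difficulty is that $G/e$ is not a Ferrers graph.

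I therefore prefer a linear-algebra route. Order the vertices so that $r_m$ is last, and delete its row and column from $L_G$. The remaining matrix is the Laplacian of $G-r_m$ plus the diagonal correction $x_my_j^{\,}\cdot$ (multiplied appropriately) on the columns $c_1,\dots,c_{\lambda_m}$. Expanding this perturbation is messy.

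\textbf{Alternative (expected main route).} Use the electrical-network identity underlying \cite{E1}. It is simpler to prove the formula by induction on $m+n$, removing the last row $r_m$ as a whole, via a Schur complement. Delete the row and column of $r_m$'s neighbour $c_1$ instead, i.e.\ compute $\det L_G(c_1,c_1)$. In this minor, $r_m$'s only remaining entries are the diagonal term $x_m\sum_{j\le\lambda_m}y_j$ and the off-diagonal terms $-x_my_j$ for $2\le j\le\lambda_m$.

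Taking the Schur complement on $r_m$ subtracts the rank-one matrix
$$\frac{x_m}{\sum_{j\le\lambda_m}y_j}\,\mathbf{w}\mathbf{w}^{\top},\qquad \mathbf{w}=(y_2,\dots,y_{\lambda_m}).$$
This rank-one term must be shown to equal the Laplacian change of deleting $r_m$, up to a weight shift on $c_1$, which lives outside the minor. I expect verifying this identity to be the main obstacle. To handle it, I would apply the matrix determinant lemma (Theorem \ref{BX}) to compare with $\det L_{G-r_m}(c_1,c_1)$, which is known by induction.

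In the comparison, the ratio should come out to $x_m\sum_{j=1}^{\lambda_m}y_j$ multiplied by corrections to the factors $\sum_{i\le\lambda'_j}x_i$ for $2\le j\le\lambda_m$. Those factors change from $\sum_{i<m}$ to $\sum_{i\le m}$. The corrections must telescope, and checking this telescoping is the crux.

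\textbf{Step 3 (base case and closing argument).} The base case is a single box, or the star case $m=1$: then $P_G=x_1^{n}\prod_j y_j$, which matches $F_\lambda$ because the empty product over $i$ is $1$ and each $\lambda'_j=1$ contributes $x_1$. Singular specializations are covered by the polynomial-continuity argument, exactly as in the proof of Theorem \ref{xshd-1}.
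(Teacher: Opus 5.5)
The paper does not prove this statement; it cites it from Klee--Stamps and Ehrenborg--van Willigenburg. So your argument has to stand on its own, and as written it does not.

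\textbf{The gap.} Everything hinges on Step 2, which you leave as ``the crux'': the ratio is only asserted to ``come out'' and the corrections to ``telescope''. Step 1 cannot substitute for it, since, for example, $\lambda=(2,2)$ (the $4$-cycle) has no leaf.

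Moreover, the mechanism you describe would fail. Schur-complementing $r_m$ out of $L_G(c_1,c_1)$ gives
\[
L_{G-r_m}(c_1,c_1)+x_m\Bigl(\operatorname{diag}(\mathbf w)-\tfrac{1}{Y}\mathbf w\mathbf w^{\top}\Bigr),\qquad Y=\sum_{j\le\lambda_m}y_j .
\]
The reason is that the ``Laplacian change of deleting $r_m$'' is the diagonal matrix $x_m\operatorname{diag}(y_2,\dots,y_{\lambda_m})$: each $c_j$ with $j\le\lambda_m$ loses $x_my_j$ from its diagonal entry. It is not your rank-one term. Three consequences follow.
\begin{itemize}
\item The net perturbation is nonsingular on the $\lambda_m-1$ coordinates $c_2,\dots,c_{\lambda_m}$. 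So one application of Theorem~\ref{BX} does not relate the two determinants.
\item Even when $\lambda_m=2$, where the update is rank one, the term $\mathbf b^{\top}\operatorname{adj}(A)\mathbf a$ involves a smaller minor of $L_{G-r_m}$ (a two-rooted forest count). Your induction hypothesis does not supply it.
\item Nothing telescopes. Since $\lambda'_j=m$ for all $j\le\lambda_m$, the identity you need is $P_G=x_mY\,(X_m/X_{m-1})^{\lambda_m-1}P_{G-r_m}$, where $X_t=\sum_{i\le t}x_i$. This is a non-polynomial ratio, and only structural input will produce it.
\end{itemize}

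\textbf{The missing idea.} The vertices $c_1,\dots,c_{\lambda_m}$ are twins in $G-r_m$: each is adjacent to exactly $r_1,\dots,r_{m-1}$. For $m\ge2$, take the cofactor at $r_1$ instead of $c_1$. Set $A=L_{G-r_m}(r_1,r_1)$, let $D=\operatorname{diag}(y_1,\dots,y_{\lambda_m})$ act on $c_1,\dots,c_{\lambda_m}$, and now take $\mathbf w=(y_1,\dots,y_{\lambda_m})$. The Schur complement at $r_m$ gives $P_G=x_mY\det B$ with $B=A+x_mD\Pi$. Here $\Pi x=x|_{c_1,\dots,c_{\lambda_m}}-\frac{\mathbf w^{\top}x}{Y}\mathbf{1}$ is an idempotent with image
\[
U=\{u:\operatorname{supp}u\subseteq\{c_1,\dots,c_{\lambda_m}\},\ \mathbf w^{\top}u=0\}, \qquad \dim U=\lambda_m-1 .
\]

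Take $u\in U$. Every $r_i$ with $i<m$ is adjacent to all of $c_1,\dots,c_{\lambda_m}$, so the $r_i$-entries of $Au$ equal $-x_i\mathbf w^{\top}u=0$. There are no $c$--$c$ edges, so $Au=X_{m-1}Du$. Since $\Pi$ maps into $U$, this gives $A^{-1}B=I+\frac{x_m}{X_{m-1}}\Pi$, and hence $\det B=(X_m/X_{m-1})^{\lambda_m-1}\det A$.

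Work over the field of rational functions in the $x_i,y_j$; there $A$ is invertible because $\det A=P_{G-r_m}\neq0$. The resulting ratio is exactly the ratio of the right-hand sides when row $m$ is removed. Induction on $m$ from the star case $m=1$ therefore closes, with your leaf case recovered as $\lambda_m=1$.
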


By applying this  enumerative   formula, we present a formula for counting odd spanning trees in a Ferrers graph.

\begin{theorem}\label{xshzy-1}
 Let $G$ be a Ferrers graph whose vertices are partitioned as $V(G) = R \cup C$ with $|R| = m$ and $|C| = n$, where $m+n$ is even. Suppose that $G$ corresponds to the partition $\lambda = (\lambda_1,\lambda_2, \ldots, \lambda_m)$ whose dual partition is $\lambda' = (\lambda'_1, \lambda'_2, \ldots, \lambda'_n)$. Then
 \begin{equation*}
\begin{split}
 \tau_o(G)=&\frac{1}{2^{m+n}}\left(\sum_{k_{1}=0}^{p_{1}}\sum_{k_{2}=0}^{p_{2}}\cdots\sum_{k_{m}=0}^{p_{m}}\Big(\prod_{t=1}^{m}\binom{p_{t}}{k_{t}}\Big)\prod_{i=2}^{m}\Big(2\sum_{t=i}^{m}k_t-\lambda_{i}\Big)\right)\\
 &\cdot\left(\sum_{\ell_{1}=0}^{q_{1}}\sum_{\ell_{2}=0}^{q_{2}}\cdots\sum_{\ell_{n}=0}^{q_{n}}\Big(\prod_{s=1}^{n}\binom{q_{s}}{\ell_{s}}\Big)\prod_{j=2}^{n}\Big(2\sum_{s=j}^{n}\ell_s-\lambda_{j}^{\prime}\Big)\right),
\end{split}
\end{equation*}
where $p_i=\lambda_i-\lambda_{i+1}$ with $\lambda_{m+1}=0$  and $q_j=\lambda_j'-\lambda_{j+1}'$ with $\lambda_{n+1}'=0$.
\end{theorem}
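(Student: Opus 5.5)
We follow the same scheme as for the complete split graph (Theorem \ref{xshu-1}), now with the Ferrers graph polynomial of Theorem \ref{thm:ferrers}. Apply Theorem \ref{x2-2} to $G$ with the assignment $\boldsymbol{\sigma}=(\alpha_1,\ldots,\alpha_m,\beta_1,\ldots,\beta_n)$, where $\alpha_i$ is the value at $r_i$ and $\beta_j$ the value at $c_j$. Multiplying $P_G(\boldsymbol{\sigma})$ by $\prod_i\alpha_i\prod_j\beta_j$ squares the leading monomial factor, so it equals $1$. This leaves
$$\tau_o(G)=\frac{1}{2^{m+n}}\sum_{\boldsymbol{\alpha}\in\{\pm1\}^m}\sum_{\boldsymbol{\beta}\in\{\pm1\}^n}\prod_{i=2}^m\Big(\sum_{j=1}^{\lambda_i}\beta_j\Big)\prod_{j=2}^n\Big(\sum_{i=1}^{\lambda'_j}\alpha_i\Big).$$
The first product depends only on $\boldsymbol{\beta}$ and the second only on $\boldsymbol{\alpha}$. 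Hence the double sum factors as
$$\Big(\sum_{\boldsymbol{\beta}}\prod_{i=2}^m\sum_{j\le\lambda_i}\beta_j\Big)\Big(\sum_{\boldsymbol{\alpha}}\prod_{j=2}^n\sum_{i\le\lambda'_j}\alpha_i\Big).$$

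Next we evaluate each factor by grouping sign vectors into blocks on which the relevant prefix sums are constant. For the $\boldsymbol{\beta}$-factor, split the column indices $\{1,\ldots,n\}=\{1,\ldots,\lambda_1\}$ into consecutive blocks $B_t=\{\lambda_{t+1}+1,\ldots,\lambda_t\}$ for $t=1,\ldots,m$, with $\lambda_{m+1}=0$. Block $B_t$ has size $p_t=\lambda_t-\lambda_{t+1}$, and $\{1,\ldots,\lambda_i\}=\bigcup_{t\ge i}B_t$. Let $k_t$ be the number of $+1$ entries of $\boldsymbol{\beta}$ in $B_t$. Then
$$\sum_{j\le\lambda_i}\beta_j=\sum_{t\ge i}(2k_t-p_t)=2\sum_{t=i}^m k_t-\lambda_i,$$
using the telescoping identity $\sum_{t\ge i}p_t=\lambda_i$. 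The number of $\boldsymbol{\beta}$ with a given tuple $(k_1,\ldots,k_m)$ is $\prod_t\binom{p_t}{k_t}$, which gives exactly the first bracket of the statement.

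The $\boldsymbol{\alpha}$-factor is handled symmetrically. Use the blocks $\{\lambda'_{s+1}+1,\ldots,\lambda'_s\}$ of size $q_s$, note that $\lambda'_1=m$ because $r_mc_1$ is an edge, and let $\ell_s$ count the $+1$ entries in the $s$-th block; this gives the second bracket. Similarly $\lambda_1=n$ because $r_1c_n$ is an edge, so the $B_t$ do cover all of $\{1,\ldots,n\}$.

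The only real care point is the bookkeeping of the block decomposition. The blocks are indexed by the partition $\lambda$ but live on the column side, so we must check carefully that the prefix $\{1,\ldots,\lambda_i\}$ is precisely the union of the blocks with index $t\ge i$ (blocks with $p_t=0$ are empty and harmless). Everything else is the routine counting already used in Theorems \ref{xshxsh-1} and \ref{xshu-1}.
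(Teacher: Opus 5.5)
Your proposal is correct and is essentially the paper's proof: you cancel the monomial factor by squaring, split the sum into a $\boldsymbol{\beta}$-part and an $\boldsymbol{\alpha}$-part, and count sign vectors block by block using $C_t=N_G(r_t)\setminus N_G(r_{t+1})$ (your $B_t$) and $R_s=N_G(c_s)\setminus N_G(c_{s+1})$. Your explicit checks are welcome extra care: that $\lambda_1=n$ and $\lambda'_1=m$, so the blocks exhaust $C$ and $R$, and that the $k_t$ count signs of $\boldsymbol{\beta}$ rather than $\boldsymbol{\alpha}$ (the paper's text momentarily swaps these).
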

\begin{proof}
By Theorems \ref{x2-2} and \ref{thm:ferrers}, for any assignment vector $\sigma=(\sigma_1,\sigma_2,\ldots,\sigma_{m+n})\in \{\pm 1\}^{m+n}$ with $\sigma_i=\alpha_i\in \{\pm 1\}$ for $1\leq i\leq m$ and $\sigma_{m+j}=\beta_j\in \{\pm 1\}$ for $1\leq j\leq n$, we compute
 \begin{equation}\label{xshz-3}
\begin{split}
\tau_o(G)&=\frac{1}{2^{m+n}}\sum_{\boldsymbol{\sigma}\in\{\pm1\}^{m+n}}\left(\prod_{i=1}^{m+n}\sigma_i\right)P_G(\sigma_1,\sigma_2,\ldots,\sigma_{m+n})\\
&=\frac{1}{2^{m+n}}\sum_{\boldsymbol{\sigma}\in\{\pm1\}^{m+n}}\left(\prod_{i=1}^{m}\alpha_i\cdot\prod_{j=1}^{n}\beta_j\right)^{2}\left( \prod_{i=2}^m \Big( \sum_{j=1}^{\lambda_i} \beta_j \Big) \right)  \left( \prod_{j=2}^n \Big( \sum_{i=1}^{\lambda'_j} \alpha_i \Big) \right)\\
&=\frac{1}{2^{m+n}}\sum_{\boldsymbol{\sigma}\in\{\pm1\}^{m+n}}\left( \prod_{i=2}^m \Big( \sum_{j=1}^{\lambda_i} \beta_j \Big) \right)  \left( \prod_{j=2}^n \Big( \sum_{i=1}^{\lambda'_j} \alpha_i \Big) \right)\\
&=\frac{1}{2^{m+n}}\left(\sum_{\boldsymbol{\beta}\in\{\pm1\}^{n}}\prod_{i=2}^m \Big( \sum_{j=1}^{\lambda_i} \beta_j \Big) \right) \left(\sum_{\boldsymbol{\alpha}\in\{\pm1\}^{m}}\prod_{j=2}^n \Big( \sum_{i=1}^{\lambda'_j} \alpha_i\Big)\right),
\end{split}
\end{equation}
where $\boldsymbol{\alpha}=(\alpha_1,\alpha_2,\ldots,\alpha_{m})\in \{\pm 1\}^{m}$  and $\boldsymbol{\beta}=(\beta_1,\beta_2,\ldots,\beta_{n})\in \{\pm 1\}^{n}$, which represent the weights assigned to the vertices in  $R$ and $C$, respectively.

Now we classify the  assignment  vectors $\boldsymbol{\alpha} \in \{\pm 1\}^{m}$ and $\boldsymbol{\beta} \in \{\pm 1\}^{n}$. Since $G$ is a Ferrers graph, we have $N_G(r_1)\supseteq N_G(r_2)\supseteq \cdots \supseteq N_G(r_m)$.  Let $r_{m+1}$ and $c_{n+1}$ be two virtual isolated vertices in $G$, that is, $N_G(r_{m+1})=\emptyset$ and $N_G(c_{n+1})=\emptyset$. Then we partition the vertex set $C$ into $m$ disjoint sets $C_1$, $C_2$,\ldots, $C_m$, where $C_i=N_G(r_i)- N_G(r_{i+1})$ and $|C_i|=p_i=\lambda_i-\lambda_{i+1}$  for $i\in \{1,2,\ldots,m\}$. Similarly, we partition the vertex set $R$ into $n$ disjoint sets $R_1$, $R_2$,\ldots, $R_n$, where  $R_j=N_G(c_j)- N_G(c_{j+1})$ and $|R_j|=q_j=\lambda_j'-\lambda_{j+1}'$  for $j\in \{1,2,\ldots,n\}$.  For $1\leq i\leq m$ and $1\leq j\leq n$, let $k_i$ and $\ell_j$ be the number of vertices in  $C_i$ and $R_j$  that are assigned the value $+1$, respectively. Observe that  $0\leq k_i\leq p_i$ and  $0\leq \ell_j\leq q_j$. Moreover,
 $$ \sum_{j=1}^{\lambda_i} \beta_j=\sum_{t=i}^{m}(2k_t-p_t)=2\sum_{t=i}^{m}k_t-\sum_{t=i}^{m}p_t=2\sum_{t=i}^{m}k_t-\lambda_i,$$
 $$ \sum_{i=1}^{\lambda_j'} \alpha_i=\sum_{s=j}^{n}(2\ell_s-q_s)=2\sum_{s=j}^{n}\ell_s-\sum_{s=j}^{n}q_s=2\sum_{s=j}^{n}\ell_s-\lambda_j'.$$

 For fixed nonnegative integers $k_1,k_2,\ldots,k_m$ with $0\leq k_i\leq p_i$  for $1\leq i\leq m$, the number of  such assignment vectors $\boldsymbol{\alpha}$ corresponding to these parameters is $\prod_{t=1}^{m}\binom{p_t}{k_t}$. Similarly, for  fixed nonnegative integers $\ell_1,\ell_2,\ldots,\ell_n$ with $0\leq \ell_j\leq q_j$  for $1\leq j\leq n$, the number of  such corresponding assignment vectors $\boldsymbol{\beta}$ is $\prod_{s=1}^{n}\binom{q_s}{\ell_s}$. By  Equation \eqref{xshz-3}, we have
 \begin{equation*}
\begin{split}
\tau_o(G)=&\frac{1}{2^{m+n}}\left(\sum_{\boldsymbol{\beta}\in\{\pm1\}^{n}}\prod_{i=2}^m \Big( \sum_{j=1}^{\lambda_i} \beta_j \Big) \right) \left(\sum_{\boldsymbol{\alpha}\in\{\pm1\}^{m}}\prod_{j=2}^n \Big( \sum_{i=1}^{\lambda'_j} \alpha_i\Big)\right)\\
=&\frac{1}{2^{m+n}}\left(\sum_{k_{1}=0}^{p_{1}}\sum_{k_{2}=0}^{p_{2}}\cdots\sum_{k_{m}=0}^{p_{m}}\Big(\prod_{t=1}^{m}\binom{p_{t}}{k_{t}}\Big)\prod_{i=2}^{m}\Big(2\sum_{t=i}^{m}k_t-\lambda_{i}\Big)\right)\\
 &\cdot\left(\sum_{\ell_{1}=0}^{q_{1}}\sum_{\ell_{2}=0}^{q_{2}}\cdots\sum_{\ell_{n}=0}^{q_{n}}\Big(\prod_{s=1}^{n}\binom{q_{s}}{\ell_{s}}\Big)\prod_{j=2}^{n}\Big(2\sum_{s=j}^{n}\ell_s-\lambda_{j}^{\prime}\Big)\right),
\end{split}
\end{equation*}
 which leads to the result.
\end{proof}

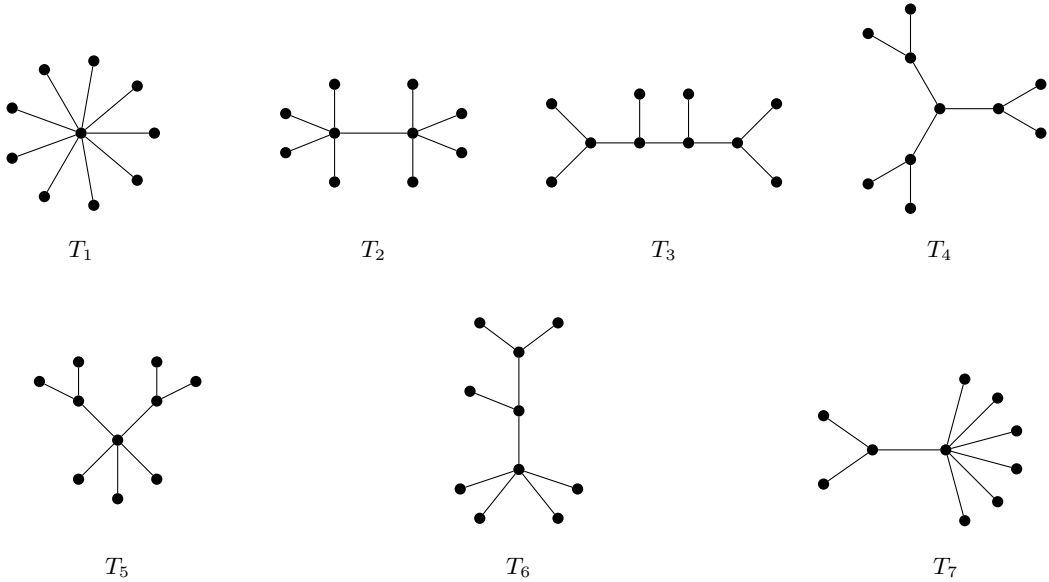
\begin{figure}[htbp]
\centering
\begin{minipage}[b]{0.24\textwidth}
\centering
\begin{tikzpicture}[scale=0.65]
  \tikzset{dot/.style={circle,fill,inner sep=1.5pt,outer sep=0pt}}
  \node[dot] (center) at (0,0) {};
  \foreach \angle in {0,40,80,120,160,200,240,280,320} {
    \node[dot] (leaf) at (\angle:1.5) {};
    \draw (center) -- (leaf);
  }
  \node[below, font=\footnotesize] at (0,-2) {$T_1$};
\end{tikzpicture}
\end{minipage}
\hfill
\begin{minipage}[b]{0.24\textwidth}
\centering
\begin{tikzpicture}[scale=0.65]
  \tikzset{dot/.style={circle,fill,inner sep=1.5pt,outer sep=0pt}}
  \node[dot] (v5a) at (-0.8,0) {};
  \node[dot] (v5b) at (0.8,0) {};
  \draw (v5a) -- (v5b);
  \node[dot] (la1) at (-0.8,1) {};
  \node[dot] (la2) at (-0.8,-1) {};
  \node[dot] (la3) at (-1.8,0.4) {};
  \node[dot] (la4) at (-1.8,-0.4) {};
  \draw (v5a) -- (la1) (v5a) -- (la2) (v5a) -- (la3) (v5a) -- (la4);
  \node[dot] (lb1) at (0.8,1) {};
  \node[dot] (lb2) at (0.8,-1) {};
  \node[dot] (lb3) at (1.8,0.4) {};
  \node[dot] (lb4) at (1.8,-0.4) {};
  \draw (v5b) -- (lb1) (v5b) -- (lb2) (v5b) -- (lb3) (v5b) -- (lb4);
  \node[below, font=\footnotesize] at (0,-2) {$T_2$};
\end{tikzpicture}
\end{minipage}
\hfill
\begin{minipage}[b]{0.24\textwidth}
\centering
\begin{tikzpicture}[scale=0.65]
  \tikzset{dot/.style={circle,fill,inner sep=1.5pt,outer sep=0pt}}
  \node[dot] (v1) at (-1.5,0) {};
  \node[dot] (v2) at (-0.5,0) {};
  \node[dot] (v3) at (0.5,0) {};
  \node[dot] (v4) at (1.5,0) {};
  \draw (v1) -- (v2) -- (v3) -- (v4);
  \node[dot] (l11) at (-2.3,0.8) {};
  \node[dot] (l12) at (-2.3,-0.8) {};
  \draw (v1) -- (l11) (v1) -- (l12);
  \node[dot] (l21) at (-0.5,1) {};
  \draw (v2) -- (l21);
  \node[dot] (l31) at (0.5,1) {};
  \draw (v3) -- (l31);
  \node[dot] (l41) at (2.3,0.8) {};
  \node[dot] (l42) at (2.3,-0.8) {};
  \draw (v4) -- (l41) (v4) -- (l42);
  \node[below, font=\footnotesize] at (0,-1.8) {$T_3$};
\end{tikzpicture}
\end{minipage}
\hfill
\begin{minipage}[b]{0.24\textwidth}
\centering
\begin{tikzpicture}[scale=0.65]
  \tikzset{dot/.style={circle,fill,inner sep=1.5pt,outer sep=0pt}}

  \node[dot] (center) at (0,0) {};

  \foreach \angle in {0,120,240} {
    \node[dot] (branch) at (\angle:1.2) {};
    \draw (center) -- (branch);

    \node[dot] (leaf1) at ($(branch)+(\angle+30:1)$) {};
    \node[dot] (leaf2) at ($(branch)+(\angle-30:1)$) {};
    \draw (branch) -- (leaf1) (branch) -- (leaf2);
  }
  \node[below, font=\footnotesize] at (0,-2.5) {$T_4$};
\end{tikzpicture}
\end{minipage}

\vspace{0.6cm}

\begin{minipage}[b]{0.30\textwidth}
\centering
\begin{tikzpicture}[scale=0.65]
  \tikzset{dot/.style={circle,fill,inner sep=1.5pt,outer sep=0pt}}

  \node[dot] (v5) at (0,0) {};

  \node[dot] (l51) at (-0.8,-0.8) {};
  \node[dot] (l52) at (0,-1.2) {};
  \node[dot] (l53) at (0.8,-0.8) {};
  \draw (v5) -- (l51) (v5) -- (l52) (v5) -- (l53);

  \node[dot] (v3a) at (-0.8,0.8) {};
  \node[dot] (v3b) at (0.8,0.8) {};
  \draw (v5) -- (v3a) (v5) -- (v3b);

  \node[dot] (l3a1) at (-1.6,1.2) {};
  \node[dot] (l3a2) at (-0.8,1.6) {};
  \draw (v3a) -- (l3a1) (v3a) -- (l3a2);

  \node[dot] (l3b1) at (0.8,1.6) {};
  \node[dot] (l3b2) at (1.6,1.2) {};
  \draw (v3b) -- (l3b1) (v3b) -- (l3b2);

  \node[below, font=\footnotesize] at (0,-2.2) {$T_5$};
\end{tikzpicture}
\end{minipage}
\hfill
\begin{minipage}[b]{0.30\textwidth}
\centering
\begin{tikzpicture}[scale=0.65]
  \tikzset{dot/.style={circle,fill,inner sep=1.5pt,outer sep=0pt}}
  \node[dot] (v5) at (0,-0.6) {};
  \node[dot] (v3a) at (0,0.6) {};
  \node[dot] (v3b) at (0,1.8) {};
  \draw (v5) -- (v3a) -- (v3b);
  \node[dot] (l51) at (-1.2,-1) {};
  \node[dot] (l52) at (-0.8,-1.6) {};
  \node[dot] (l53) at (0.8,-1.6) {};
  \node[dot] (l54) at (1.2,-1) {};
  \draw (v5) -- (l51) (v5) -- (l52) (v5) -- (l53) (v5) -- (l54);
  \node[dot] (l3a) at (-1,1) {};
  \draw (v3a) -- (l3a);
  \node[dot] (l3b1) at (-0.8,2.4) {};
  \node[dot] (l3b2) at (0.8,2.4) {};
  \draw (v3b) -- (l3b1) (v3b) -- (l3b2);
  \node[below, font=\footnotesize] at (0,-2.2) {$T_6$};
\end{tikzpicture}
\end{minipage}
\hfill
\begin{minipage}[b]{0.30\textwidth}
\centering
\begin{tikzpicture}[scale=0.65]
  \tikzset{dot/.style={circle,fill,inner sep=1.5pt,outer sep=0pt}}
  \node[dot] (center) at (0,0) {};
  \foreach \angle in {-75,-45,-15,15,45,75} {
    \node[dot] (leaf) at (\angle:1.5) {};
    \draw (center) -- (leaf);
  }
  \node[dot] (branch) at (-1.5,0) {};
  \draw (center) -- (branch);
  \node[dot] (leaf1) at (-2.5,0.7) {};
  \node[dot] (leaf2) at (-2.5,-0.7) {};
  \draw (branch) -- (leaf1) (branch) -- (leaf2);
  \node[below, font=\footnotesize] at (0,-2) {$T_7$};
\end{tikzpicture}
\end{minipage}
\caption*{\footnotesize\textbf{Fig.\,2.} All non-isomorphic unlabeled odd  trees of order 10: $T_i$ for $i\in \{1,2,\ldots,7\}$.}
\end{figure}

Note that there are exactly $7$ non-isomorphic unlabeled odd trees  of order $10$, shown in Fig.\,$2$. Finally, we provide a simple example for Theorem \ref{xshzy-1}.
\begin{example}
As shown in Fig.\,$1$, let $G$ be the Ferrers  graph   corresponding to the partition $(5,5,3,2,1)$ with dual partition $(5,4, 3, 2,2)$. Observe that there is no odd spanning tree in $G$ isomorphic to $T_1$, $T_4$, $T_6$ or $T_7$. Note that there are $2$ odd spanning trees isomorphic to $T_2$, $32$  ones  isomorphic to $T_3$,  and  $16$  ones isomorphic to $T_5$.

Moreover, applying Theorem  $\ref{xshzy-1}$, we have $p_1=0$, $p_2=2$, $p_3=1$, $p_4=1$, $p_5=1$ and $q_1=1$, $q_2=1$, $q_3=1$, $q_4=0$, $q_5=2$. Then $k_1=0$ and $\ell_4=0$. Consequently,
\begin{equation*}
\begin{split}
\tau_o(G)=&\frac{1}{2^{10}}\left(\sum_{k_{2}=0}^{2}\sum_{k_{3}=0}^{1}\sum_{k_{4}=0}^{1}\sum_{k_{5}=0}^{1}\binom{2}{k_2}\binom{1}{k_3}\binom{1}{k_4}\binom{1}{k_5}\prod_{i=2}^{5}\Big(2\sum_{t=i}^{5}k_t-\lambda_{i}\Big)\right)\\
&\cdot\left(\sum_{\ell_{1}=0}^{1}\sum_{\ell_{2}=0}^{1}\sum_{\ell_{3}=0}^{1}\sum_{\ell_{5}=0}^{2}\binom{1}{\ell_1}\binom{1}{\ell_2}\binom{1}{\ell_3}\binom{2}{\ell_5}\prod_{j=2}^{5}\Big(2\sum_{s=j}^{5}\ell_s-\lambda_{j}'\Big)\right)\\
=&\frac{1}{2^{10}}\cdot160\cdot320=50.
\end{split}
\end{equation*}

\end{example}

\section{Concluding remarks}
 In this paper we  provide a  unified technique to  enumerate odd spanning trees in general graphs. As applications, the  explicit formulas for counting odd spanning trees in several special classes of  graphs are derived from our work. Recall that an odd spanning tree must be a HIST, which is a spanning tree  containing no vertex of degree two. This leads to the following relevant problems worthy of investigation.

\begin{problem}
Find the formulas for enumerating homeomorphically irreducible spanning trees in   some special classes of  graphs.
\end{problem}

\begin{problem}
Find the formulas for enumerating  spanning trees with other degree restriction conditions in   some special classes of  graphs.
\end{problem}

\section*{Data Availability Statement}
No data is used in this research.

\section*{Conflict of Interest Statement}
The authors have no conflict of interest.

\section*{Acknowledgements}
The work was supported by National Natural Science Foundation of China (Grant No.\ 12271251), Postgraduate Research \& Practice Innovation Program of Jiangsu Province, grant number KYCX25\_0625.


\end{document}